\numberwithin{equation}{section}
\newtheorem{theorem}{Theorem}[section]
\newtheorem{lemma}[theorem]{Lemma}
\newtheorem{remark}[theorem]{Remark}
\theoremstyle{definition}
\newtheorem{definition}[theorem]{Definition}
\definecolor{webgreen}{rgb}{0,.5,0}
\definecolor{webblue}{rgb}{.6,0,0}
\begin{document}

\title{On 1-11-representability and multi-1-11-representability of graphs}

\author{Mohammed Alshammari\footnote{Department of Mathematics and Statistics, University of Strathclyde, 26 Richmond Street, Glasgow G1 1XH, United Kingdom. Email: mohammed.s.alshammari@strath.ac.uk}, Sergey Kitaev\footnote{Department of Mathematics and Statistics, University of Strathclyde, 26 Richmond Street, Glasgow G1 1XH, United Kingdom. Email: sergey.kitaev@strath.ac.uk}, Chaoliang Tang\footnote{Shanghai Center for Mathematical Sciences, Fudan University, 220 Handan Road, Shanghai 200433, China. Email: cltang22@m.fudan.edu.cn}, Tianyi Tao\footnote{Shanghai Center for Mathematical Sciences, Fudan University, 220 Handan Road, Shanghai 200433, China. Email: tytao20@fudan.edu.cn} and Junchi Zhang\footnote{Shanghai Center for Mathematical Sciences, Fudan University, 220 Handan Road, Shanghai 200433, China. Email: jczhang24@m.fudan.edu.cn}}

\date{\today}

\maketitle

\noindent\textbf{Abstract.}   Jeff Remmel introduced the concept of a $k$-11-representable graph in 2017. This concept was first explored by Cheon et al.\ in 2019, who considered it as a natural extension of word-representable graphs, which are exactly 0-11-representable graphs. A graph $G$ is $k$-11-representable if it can be represented by a word $w$ such that for any edge (resp., non-edge) $xy$ in $G$ the subsequence of $w$ formed by $x$ and $y$ contains at most $k$ (resp., at least $k+1$) pairs of consecutive equal letters. A remarkable result of Cheon at al.\ is that {\em any} graph is 2-11-representable, while it is still unknown whether every graph is 1-11-representable. Cheon et al.\ showed that the class of 1-11-representable graphs is strictly larger than that of word-representable graphs, and they introduced a useful toolbox to study 1-11-representable graphs, which was extended by additional powerful tools suggested by Futorny et al.\ in 2024.

In this paper, we prove that all graphs on at most 8 vertices are 1-11-representable hence extending the known fact that all graphs on at most 7 vertices are 1-11-representable.  Also, we discuss applications of our main result in the study of multi-1-11-representation of graphs we introduce in this paper analogously to the notion of multi-word-representation of graphs suggested by Kenkireth and Malhotra in 2023.   
 \\

\noindent {\bf AMS Classification 2010:}  05C62

\noindent {\bf Keywords:}  1-11-representable graph, multi-1-11-representation of graphs, word-representable graph

\section{Introduction}

Various methods for representing graphs extend far beyond the conventional use of adjacency or incidence matrices; for example, see \cite{Spinrad} for a discussion. Of particular relevance to our paper are representations of graphs by words or sequences, where adjacency between a pair of vertices is determined by the occurrence of a fixed pattern in the subword or subsequence formed by the vertices. For instance, in the extensively studied word-representable graphs \cite{KL15}, first studied in \cite{KP08} and defined in Section~\ref{wrg-sec}, edges are determined by the strict alternation of vertices. 

Another representation method is $k$-11-representation, introduced by Jeff Remmel in 2017 and defined in Section~\ref{k-11-repr-sub}, where at most $k$ violations of strict alternation are allowed to define an edge between two vertices. Consequently, word-representable graphs correspond precisely to 0-11-representable graphs. The concept of $k$-11-representable graphs was first studied by Cheon et al. in 2019 \cite{CKKKP2019}.

While not all graphs are word-representable, all graphs are known to be 2-11-representable  \cite{CKKKP2019}. The most intriguing open question in the theory of $k$-11-representable graphs is whether all graphs are 1-11-representable, and it remains challenging to predict an answer to this question. Recent research in this area has focused on developing powerful tools to study 1-11-representable graphs \cite{FKP}; see Section~\ref{k-11-repr-sub}.

\subsection{Our main results and the organization of the paper}

In Section~\ref{prelim-sec}, we introduce the necessary definitions and known results that will be used throughout this paper. In Section~\ref{graph-8-vertices}, we prove that all graphs with at most 8 vertices are 1-11-representable, thereby extending the previously known result that all graphs with at most 7 vertices are 1-11-representable \cite{CKKKP2019, FKP}. In Section~\ref{multi-1-11-repr-sec}, we introduce the concept of the multi-$k$-11-representation number of a graph, which generalizes the notion of the multi-word-representation number of a graph \cite{KM}.

As an application of our main results in this paper, in Section~\ref{multi-1-11-repr-sec}, we demonstrate that all graphs with at most 24 vertices have a multi-1-11-representation number of at most 2. Finally, in Section~\ref{concl-sec}, we provide concluding remarks and outline open problems.

\section{Preliminaries}\label{prelim-sec}

An orientation of a graph is {\em transitive}, if the presence of the edges $u\rightarrow v$ and $v\rightarrow z$ implies the presence of the edge $u\rightarrow z$. An undirected graph $G$ is a {\em comparability graph} if $G$ admits a transitive orientation. 

\subsection{Word-representable graphs and semi-transitive orientations}\label{wrg-sec}

\noindent 
Two letters $x$ and $y$ alternate in a word $w$ if, after deleting all letters in $w$ except for $x$ and $y$, we obtain either the word $xyxy\cdots$ or $yxyx\cdots$ (of even or odd length).  A graph $G=(V,E)$ is {\em word-representable} if and only if there exists a word $w$
over the alphabet $V$ such that letters $x$ and $y$, with $x\neq y$, alternate in $w$ if and only if $xy\in E$. The word $w$ is called a {\em word-representant} for~$G$. 

The unique minimum (by the number of vertices) non-word-representable graph with 6 vertices is the wheel graph $W_5$. Moreover,  there are 25 non-word-representable graphs on 7 vertices. Notably, the original list of 25 non-word-representable graphs with 7 vertices, as presented in~\cite{KL15}, contains two incorrect graphs. For the corrected catalog of these 25 graphs, we refer the reader to~\cite{KitaevSun}.

%A graph is {\em permutationally representable} if it can be represented by concatenation of permutations of (all) vertices. Thus, the class of permutationally representable graphs is a subclass of word-representable graphs. The following theorem classifies these graphs.
%
%\begin{theorem}[\cite{KL15}]\label{comp-thm} A graph is permutationally representable if and only if it is a comparability graph. \end{theorem} 

An orientation of a graph is {\em semi-transitive} if it is acyclic, and for any directed path $v_0\rightarrow v_1\rightarrow \cdots \rightarrow v_k$ either there is no edge from $v_0$ to $v_k$, or $v_i\rightarrow v_j$ is an edge for all $0\leq i<j\leq k$. An induced subgraph on at least four vertices $\{v_0,v_1,\ldots,v_k\}$ of an oriented graph is a {\em shortcut} if it is acyclic, non-transitive, and contains both the directed path $v_0\rightarrow v_1\rightarrow \cdots \rightarrow v_k$ and the edge $v_0\rightarrow v_k$, that is called the {\em shortcutting edge}. A semi-transitive orientation can then be alternatively defined as an acyclic shortcut-free orientation. A fundamental result in the area of word-representable graphs is the following theorem.

\begin{theorem}[\cite{Halldorsson}]\label{semi-trans-thm} A graph is word-representable if and only if it admits a semi-transitive orientation. \end{theorem}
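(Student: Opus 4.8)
The plan is to prove the two implications separately, treating the passage from a semi-transitive orientation to a word as the substantial half. For the forward direction (word-representable $\Rightarrow$ semi-transitive), I would start from a word-representant $w$ of $G$ and, for convenience, first pass to a uniform representant in which every letter occurs the same number of times (such a representant is known to exist for every word-representable graph), so that each adjacent pair of letters literally appears as $xyxy\cdots$. Given such a $w$, I would orient each edge $xy$ as $x\to y$ precisely when the first occurrence of $x$ precedes the first occurrence of $y$. Because alternation starting with $x$ produces the pattern $xyxy\cdots$, this rule is well defined, and since the orientation is compatible with the total order of first occurrences, it is automatically acyclic.

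The delicate point of this direction is shortcut-freeness, which by Theorem~\ref{semi-trans-thm}'s equivalent formulation is exactly what remains. I would argue by contradiction: assume a shortcut on $\{v_0,\ldots,v_k\}$ with directed path $v_0\to\cdots\to v_k$, shortcutting edge $v_0\to v_k$, and some missing edge $v_iv_j$ with $0\le i<j\le k$. Restricting $w$ to the relevant letters, I would analyze how the alternation of each consecutive pair along the path, together with the alternation of the endpoints $v_0$ and $v_k$, constrains the relative positions of occurrences, and show that these constraints force every intermediate pair $v_iv_j$ to alternate as well, contradicting the assumed non-edge. This propagation argument, tracking how alternating pairs interlock inside a uniform word, is the technical core of the direction and the step I expect to require the most care.

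For the reverse direction (semi-transitive $\Rightarrow$ word-representable) I would proceed by induction on $|V|$, constructing a word-representant explicitly. Fix a semi-transitive orientation; since every induced subgraph inherits a semi-transitive orientation, I can delete a vertex $v$ (chosen according to the topological order the orientation induces), obtain by the inductive hypothesis a uniform word $w'$ representing $G-v$, and then insert copies of $v$ into $w'$ so that $v$ alternates with exactly its neighbors. The point of the construction is that the chosen positions for the occurrences of $v$ must simultaneously create alternation with every neighbor and destroy it with every non-neighbor, and the shortcut-free hypothesis is precisely what guarantees that the neighbors of $v$ sit in a compatible, suitably nested configuration of positions in $w'$, so that a consistent insertion window for $v$ exists.

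The main obstacle is exactly this insertion step: translating the absence of shortcuts into a single global placement of the new vertex's occurrences that is consistent across all of its adjacencies at once. I would address it by carrying a strong induction invariant describing, for each vertex, the structural location of its occurrences relative to the others, strong enough that shortcut-freeness yields directly the existence and consistency of the insertion. I anticipate that verifying this invariant is preserved after insertion, and that no previously established alternations or non-alternations among the old vertices are disturbed, will be where the bulk of the technical work lies.
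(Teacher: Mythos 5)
You are reviewing a statement that the paper itself does not prove: Theorem~\ref{semi-trans-thm} is imported from the cited work of Halld\'orsson, Kitaev and Pyatkin, so the only meaningful comparison is with the proof in that reference. Your forward direction (word-representable $\Rightarrow$ semi-transitive) is essentially the known argument and is sound as sketched: pass to a uniform representant, orient each edge according to the order of first occurrences, get acyclicity because the orientation refines a linear order, and kill shortcuts by occurrence-tracking. Indeed, in a uniform word an edge $x\rightarrow y$ forces the $m$th occurrence of $x$ to precede the $m$th occurrence of $y$ for every $m$; chaining this along a directed path $v_0\rightarrow\cdots\rightarrow v_k$ and using the alternation of the pair $\{v_0,v_k\}$ gives, for all $i<j$ and all $m$, that the $m$th occurrence of $v_j$ lies strictly between the $m$th and $(m+1)$st occurrences of $v_i$; hence every pair $v_i,v_j$ alternates, all edges $v_iv_j$ are present, and no shortcut exists. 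Your ``propagation argument'' is exactly this computation, and it goes through.

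The reverse direction, however, contains a genuine gap, and it sits precisely at the theorem's hard core. Your plan is: delete a vertex $v$, obtain by induction a uniform representant $w'$ of $G-v$, and insert occurrences of $v$ into $w'$, asserting that shortcut-freeness ``guarantees that the neighbors of $v$ sit in a compatible, suitably nested configuration of positions in $w'$, so that a consistent insertion window exists.'' This assertion is unsubstantiated and, as stated, false: whether a new letter can be inserted so as to alternate with exactly $N(v)$ is a property of the \emph{particular word} $w'$, not of the graph $G-v$, and an arbitrary inductively obtained representant of $G-v$ need not admit any such insertion -- the global constraint of simultaneously creating alternation with all neighbours and destroying it with all non-neighbours (including non-neighbours that alternate with each other) can be unsatisfiable for a fixed $w'$. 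You defer this to an unspecified ``strong induction invariant,'' but that invariant is the entire content of the theorem; without stating and proving it, no proof exists. The actual proof in the cited reference does not insert into a generic representant: it runs the induction over vertices outside a fixed maximum clique and performs an explicit surgery on a carefully controlled uniform word (producing, in the end, $2(n-\kappa)$-uniform representants, where $\kappa$ is the clique number), using the in- and out-neighbourhoods of the added vertex and invoking semi-transitivity to verify the concrete alternation properties of the constructed word. Also note that choosing $v$ ``according to the topological order'' (a source or sink) does nothing by itself to produce the nested configuration you need. In short: your first half reproduces the standard proof; your second half is a plan whose load-bearing step is missing.
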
 

For instance, it follows from Theorem~\ref{semi-trans-thm} that every 3-colourable graph is word-representable (simply direct each edge from a lower colour to a higher one).
In the literature, word-representable graphs are often referred to as semi-transitive graphs.

\subsection{$k$-11-representable graphs}\label{k-11-repr-sub}

\noindent
A {\em factor} in a word $w_1w_2\ldots w_n$ is a word $w_iw_{i+1}\ldots w_j$ for $1\leq i\leq j\leq n$.
% For a letter or a word $x$, we let $x^k$ denote $\underbrace{x\ldots x}_{k\mbox{ \small times}}$.
 For any word $w$, let $\pi(w)$ be the {\em initial permutation} of $w$ obtained by reading $w$ from left to right and recording the leftmost occurrences of the letters in $w$. Denote by $r(w)$ the {\em reverse} of $w$, that is, $w$ written in the reverse order. Finally, for a pair of letters $x$ and $y$ in a word $w$, let $w|_{\{x,y\}}$ be the subword induced by the letters $x$ and~$y$.  For example, if $w=42535214421$ then $\pi(w)=42531,\ r(w)=12441253524,$ and $w|_{\{4,5\}}=45544$.

Let $k\geq 0$. A graph $G=(V,E)$ is {\em $k$-$11$-representable} if there exists a word $w$ over the alphabet $V$ such that the word $w|_{\{x,y\}}$  contains in total at most $k$ occurrences of the factors in $\{xx,yy\}$ if and only if $xy$ is an edge in $E$. Such a word $w$ is called $G$'s {\em $k$-$11$-representant}. 
Note that $0$-$11$-representable graphs are precisely word-representable graphs, and that $0$-$11$-representants are precisely word-representants.
A graph $G=(V,E)$ is {\em permutationally $k$-$11$-representable} if it has a $k$-$11$-representant that is a concatenation of permutations of $V$.
 The ``11'' in ``$k$-$11$-representable'' refers to counting occurrences of the {\em consecutive pattern} 11 in the word induced by a pair of letters $\{x,y\}$, which is exactly the total number of occurrences of the factors in $\{xx,yy\}$.  

A {\em uniform} (resp., {\em $t$-uniform}) representant of a graph $G$ is a word, satisfying the required properties, in which each letter occurs the same (resp., $t$) number of times. It is known that each word-representable graph has a uniform representant~\cite{KP08}, 
the class of 2-uniform representable graphs is exactly the class of circle graphs~\cite{KL15}, while the class of $2$-uniform $1$-$11$-representable graphs is the 
class of interval graphs~\cite{CKKKP2019}.  The main result in~\cite{CKKKP2019} is the following theorem.

\begin{theorem}[\cite{CKKKP2019}]\label{begin-end} Every graph $G$ is permutationally $2$-$11$-representable.
\end{theorem}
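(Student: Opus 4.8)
The plan is to exploit the permutational structure to reduce the statement to a purely combinatorial count of order reversals. Write the desired representant as a concatenation $w = p_1 p_2 \cdots p_m$ of permutations of $V$. For a fixed pair $\{x,y\}$, each $p_i$ restricts to one of the two-letter blocks $xy$ or $yx$, so $w|_{\{x,y\}}$ is a concatenation $b_1 b_2 \cdots b_m$ of such blocks. No factor from $\{xx,yy\}$ can occur inside a block, and a check of the four cases shows that a factor from $\{xx,yy\}$ occurs at the junction of $b_i$ and $b_{i+1}$ if and only if $b_i \neq b_{i+1}$. Hence the number of factors from $\{xx,yy\}$ in $w|_{\{x,y\}}$ equals the number of indices $i$ at which the relative order of $x$ and $y$ changes between $p_i$ and $p_{i+1}$, a quantity I will call the number of \emph{flips} of the pair. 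The theorem is thus equivalent to constructing permutations $p_1,\dots,p_m$ of $V$ in which every edge flips at most twice and every non-edge at least three times.

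My main tool is the observation that reversing a contiguous block of positions of a permutation flips precisely those pairs both of whose elements lie in the block, and leaves every other pair unchanged; reversals are therefore a controllable atomic move. Starting from the identity permutation, where every pair has zero flips, and appending a sequence of block reversals, the aim is to arrange that each non-edge lies inside a reversed block at least three times while each edge does so at most twice. A point to keep in mind throughout is that flips never cancel: each reversal containing a pair increases that pair's count, so the only quantity that matters for a pair is the total number of reversals along the word that enclose it.

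To decide which blocks to reverse, I would organize the non-edges using a proper edge-colouring of the complement graph into matchings $M_1,\dots,M_t$, with $t$ bounded by K\"{o}nig's or Vizing's theorem. For a single matching, whose pairs are pairwise disjoint, one can pass to a permutation in which all of its pairs are simultaneously adjacent and reverse each of those length-two blocks, thereby adding one flip to every pair of the matching and to nothing else; repeating this lifts each non-edge above the threshold of three.

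The main obstacle, which is the heart of the theorem, is that these stages cannot be performed independently. Rearranging the permutation so that the pairs of the next matching become adjacent forces reversals of blocks that also contain edges, and since flips only accumulate, every such incidental reversal permanently spends part of an edge's budget of two. Moreover, some non-edges are forced, by transitivity through common neighbours, to keep changing order relative to vertices joined to them by edges, as is already visible in a path on three vertices $x,y,z$ where the end pair is a non-edge squeezed between two edges. Consequently no purely local scheme can both reach three flips on every non-edge and stay within two on every edge. The decisive step is therefore to produce a single global schedule of permutations meeting all $\binom{n}{2}$ budgets at once, and I expect this to hinge on choosing the order in which the vertices are inserted, equivalently the nesting of the reversed blocks, so that each edge is enclosed by at most two of them, together with a careful count verifying that such a nesting always exists.
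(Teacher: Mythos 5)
Your reduction of the statement to a flip-counting problem is correct: for a concatenation of permutations, occurrences of the factors in $\{xx,yy\}$ in $w|_{\{x,y\}}$ arise only at junctions of consecutive permutations, exactly when the relative order of $x$ and $y$ changes, so the theorem is indeed equivalent to finding permutations $p_1,\dots,p_m$ in which every edge flips at most twice and every non-edge at least three times. Your atomic-move lemma (a block reversal flips precisely the pairs inside the block) is also correct. But the proof stops there. The matching-by-matching scheme you outline does not work as stated, for the reason you yourself identify: to make the pairs of the next matching of $\overline{G}$ adjacent you must pass through a ``restaging'' junction whose flip set is an essentially uncontrolled inversion set, and since flips only accumulate, each such junction irrevocably spends part of the budget of $2$ of every edge it inverts. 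With up to roughly $\Delta(\overline{G})+1$ matchings, each needing three separate passes to push its non-edges to three flips, nothing in the proposal bounds the number of restaging junctions enclosing a given edge by two. Your final paragraph --- that one must instead find ``a single global schedule'' whose existence should follow from ``choosing the order in which the vertices are inserted'' and ``a careful count verifying that such a nesting always exists'' --- is not an argument; it is a restatement of the theorem. The entire content of the result, as proved in the cited paper of Cheon et al., is an explicit construction of such a schedule of permutations, and that construction (or any complete substitute for it) is exactly what is missing here.

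A secondary soft spot: the heuristic claim that ``no purely local scheme'' can work, illustrated by the path on three vertices, is neither proved nor used, and the $P_3$ example does not actually exhibit an obstruction (a non-edge between two edges is easy to handle in isolation). So the write-up contains a correct and useful reformulation plus a correct toolkit, but the decisive combinatorial step --- exhibiting, for an arbitrary graph, a sequence of permutations meeting all $\binom{n}{2}$ flip budgets simultaneously, or proving nonconstructively that one exists --- is acknowledged rather than supplied. As it stands, this is a plan of attack, not a proof.
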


Thus, when determining whether each graph is $k$-$11$-representable for a fixed $k$, the only case left to study is $k=1$ (as the answer is no for $k=0$ and yes for $k\geq 2$).

\subsection{Known tools to study 1-11-representable graphs}\label{known-tools-sec}

Each word-representable graph  is 1-11-representable. Indeed, if $w$ is a word-representant of $G$ then, for instance,  $ww$ or  $r(\pi(w))w$ are its 1-11-representants. Moreover, each graph on at most 7 vertices is 1-11-representable \cite{CKKKP2019,FKP}. The key tools to study 1-11-representation of graphs from~\cite{CKKKP2019,FKP} can be unified as follows.

\begin{lemma}[\cite{CKKKP2019}]\label{type1-lem}  \ 

{\rm (a)} Let $G_1$ and $G_2$ be $1$-$11$-representable graphs. Then their disjoint union, glueing them in a vertex or connecting them by an edge results in a $1$-$11$-representable graph.

{\rm (b)} If $G$ is $1$-$11$-representable then for any edge $xy$ adding a new vertex adjacent to $x$ and $y$ only, gives a $1$-$11$-representable graph.
\end{lemma}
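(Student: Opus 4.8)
The plan is to prove each statement by explicitly constructing a $1$-$11$-representant for the resulting graph from the given $1$-$11$-representants of the building blocks, controlling the count of $11$-patterns for every pair of vertices. Let $w_1$ be a $1$-$11$-representant of $G_1=(V_1,E_1)$ and $w_2$ a $1$-$11$-representant of $G_2=(V_2,E_2)$, where $V_1\cap V_2=\emptyset$ in the disjoint-union and edge-connection cases. For a pair of letters $x,y$ I must verify that the induced subword $w|_{\{x,y\}}$ has at most one occurrence of the consecutive pattern $11$ exactly when $xy$ is to be an edge. The key observation to exploit throughout is that if $x$ and $y$ come from different blocks, then in a suitable concatenation the induced subword has a very simple ``block'' shape, which makes the $11$-count easy to read off; if $x$ and $y$ lie in the same block, their relative arrangement is inherited from $w_i$, so the edge/non-edge status is preserved as long as the interleaving with the other block does not create or destroy adjacent equal letters between $x$ and $y$.

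For part (a), I would handle the three operations via concatenation. For the disjoint union, I expect $w_1 w_2$ to work: pairs within $V_1$ or within $V_2$ keep their counts, while a cross pair $x\in V_1$, $y\in V_2$ gives an induced subword consisting of all $x$'s followed by all $y$'s, producing $|x|_w - 1 + |y|_w - 1 \ge 2$ occurrences of $11$ whenever each letter appears at least twice, so $xy$ is correctly a non-edge; one must first ensure (by doubling or otherwise) that every letter occurs at least twice. For connecting $G_1$ and $G_2$ by a single edge, say between $a\in V_1$ and $b\in V_2$, I would engineer the concatenation so that the induced subword on $\{a,b\}$ alternates enough to land on a count of exactly $1$ while all other cross pairs still give count $\ge 2$; the standard trick is to place a single copy of $a$ at the junction or to use an initial-permutation prefix $r(\pi(\cdot))$ so that the target pair alternates once more than the others. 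Glueing in a vertex is similar but requires identifying one shared letter across the two words and checking that its interaction with both alphabets is simultaneously correct.

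For part (b), the construction adds a new vertex $z$ adjacent only to $x$ and $y$, where $xy$ is already an edge of $G$. Since $xy$ is an edge, $w|_{\{x,y\}}$ has at most one $11$-pattern; I would insert occurrences of $z$ into $w$ at positions chosen so that $z$ nearly alternates with each of $x$ and $y$ (yielding count $\le 1$ for $\{x,z\}$ and $\{y,z\}$) but fails to alternate with every other vertex (yielding count $\ge 2$), all while not disturbing any of the existing pairwise counts among the old vertices. The natural placement is to put copies of $z$ immediately adjacent to the copies of $x$ and $y$ that witness their alternation, using the edge $xy$ to guarantee that $x$, $y$, and $z$ can be threaded together coherently.

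The main obstacle I anticipate is the bookkeeping for cross pairs in the vertex-glueing and edge-connection cases: one must simultaneously guarantee that the single designated cross pair achieves count exactly $1$ while \emph{all} other cross pairs achieve count $\ge 2$, and that inserting or reusing boundary letters does not inadvertently lower some intended non-edge count to $1$ or raise an intended edge count above $1$. Managing this uniformly requires first normalizing each $w_i$ (e.g.\ ensuring every letter appears at least twice and controlling the initial permutation), after which the $11$-counts for cross pairs become uniform and transparent; the verification then reduces to a finite case check on the few pairs involving the junction letters.
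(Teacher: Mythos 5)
This lemma is quoted in the paper from \cite{CKKKP2019} without proof, so there is no internal argument to compare against; judged on its own terms, your proposal has the right general flavour (explicit concatenation/insertion constructions with bookkeeping of $11$-counts, which is indeed how such closure properties are proved), but it contains an outright error and leaves unproved exactly the steps that constitute the lemma. The error: ``doubling'' does not preserve $1$-$11$-representation. If $w$ is a $1$-$11$-representant of $G$ and $xy$ is an edge with exactly one occurrence of $11$ in $w|_{\{x,y\}}$, then $(ww)|_{\{x,y\}}=w|_{\{x,y\}}\,w|_{\{x,y\}}$ contains two or three occurrences, so in $ww$ the pair $\{x,y\}$ reads as a non-edge; doubling converts a $0$-$11$-representant into a $1$-$11$-representant (the fact recalled in Section~\ref{known-tools-sec}), but it does not normalize a $1$-$11$-representant. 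The same objection kills your other ``standard trick'': prepending $r(\pi(w))$ adds exactly one $11$ to \emph{every} pair (the junction creates $xx$ for the letter $x$ whose first occurrence in $w$ is earlier), hence turns count-$1$ edges into count-$2$ non-edges; it too is a tool for word-representants only. A correct normalization does exist --- append to $w$ the permutation listing the letters in order of their \emph{last} occurrences in $w$; one checks this changes no pair's count, and iterating it makes every letter occur as often as desired --- and with it your disjoint-union argument becomes complete, but you neither state nor prove any such statement.

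For the edge-connection, the vertex-gluing, and part (b), nothing is actually constructed, and the requirements you list are in genuine quantitative tension; resolving that tension is the content of the lemma. Take part (b), and write $c(z,u)$ for the number of occurrences of $11$ in the word induced by $\{z,u\}$. If $z$ is inserted $m$ times into a representant $w$, then for every letter $u$ of multiplicity $n_u$ the copies of $z$ split the occurrences of $u$ into at most $m+1$ blocks, so $c(z,u)\ge n_u-(m+1)$. Demanding $c(z,x)\le 1$ and $c(z,y)\le 1$ therefore forces $n_x,n_y\le m+2$, while demanding $c(z,u)\ge 2$ for all other $u$ pushes all other multiplicities to be large, since you cannot control where the copies of $z$ fall relative to $u$: their positions are dictated by the occurrences of $x$ and $y$. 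So one must produce a representant in which $x$ and $y$ are rare but all other letters are frequent (or find another mechanism entirely); no transformation you mention achieves this, and the ``finite case check on the few pairs involving the junction letters'' is not finite --- it involves every vertex of the graph. The same tension occurs in the edge-connection case ($c(a,b)\le 1$ versus $c(a,y)\ge 2$ and $c(x,b)\ge 2$ for all other cross pairs) and in the gluing case, where in a concatenation $w_1w_2$ the occurrences of the glue vertex coming from $w_1$ clump into a single block against every letter of $V_2$, inflating the counts of the $G_2$-edges at the glue vertex unless that vertex occurs exactly once in $w_1$ --- and you have no result producing such a representant. In short: the disjoint-union case is correct modulo a fixable normalization, but the other three statements are asserted rather than proved, and what is missing is precisely the machinery (count-preserving, multiplicity-controlling transformations of $1$-$11$-representants) that the cited paper develops.
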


\begin{lemma}[\cite{CKKKP2019}]\label{type2-lem} Let $G$ be  a word-representable graph, $A\subseteq V$ and $v\in V$. Then

{\rm (a)} $G\setminus \{xy\in E(G)\ |\ x,y\in A\}$ is a $1$-$11$-representable graph;

{\rm (b)} $G\setminus \{uv\in E(G)\ |\ u\in N_A(v)\}$ is a $1$-$11$-representable graph.
\end{lemma}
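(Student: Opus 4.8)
The plan is to start from a word-representant $w$ of the word-representable graph $G$ and to manufacture, by prepending controlled ``correction'' blocks, a word that destroys exactly the prescribed edges while preserving every other adjacency. The quantitative facts I will lean on are that concatenation never decreases the number of $11$-patterns in any induced subword $w|_{\{x,y\}}$, and that a single globally applied block can shift every pair's count by exactly one. Concretely, recall from the discussion preceding Lemma~\ref{type1-lem} that $r(\pi(w))\,w$ is a $1$-$11$-representant of $G$: for any pair $x,y$, the subword $r(\pi(w))|_{\{x,y\}}$ opens with the later-first-occurring of $x,y$ and ends with the earlier-first-occurring one, which is exactly the first letter of $w|_{\{x,y\}}$, so the $r(\pi(w))$--$w$ junction contributes precisely one occurrence of $11$. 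Hence in $W:=r(\pi(w))\,w$ every edge of $G$ carries exactly one $11$ and every non-edge at least two. I take this $W$ as the base, because having each edge sit at the \emph{exact} threshold $1$ is what makes the surgery possible: to delete an edge $xy$ it suffices to add a single further $11$ to $W|_{\{x,y\}}$.

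For part (a) the target is a word in which every pair inside $A$ carries at least two $11$'s, while every pair meeting $V\setminus A$ keeps its count from $W$. The idea is to prepend to $W$ a correction block built only from the letters of $A$, listed in order of their first occurrence in $w$. Since the block omits all letters of $V\setminus A$, pairs lying entirely outside $A$ are untouched. For a pair $x,y$ inside $A$, the first-occurrence ordering guarantees that the block ends, on $\{x,y\}$, with the later-first-occurring letter, which is exactly the letter opening $W|_{\{x,y\}}$; the block--$W$ junction therefore supplies one extra $11$, pushing an edge from count $1$ to count $2$ and deleting it, while non-edges inside $A$ (already at count at least $2$) remain non-edges.

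The main obstacle is the treatment of the ``mixed'' pairs $\{x,y\}$ with $x\in A$ and $y\notin A$, which are edges of $G$ that must survive at count $1$. On such a pair the correction block contributes only a copy of $x$, and if that $x$ lands next to the opening letter of $W|_{\{x,y\}}$ when the latter is again $x$ (i.e.\ when the $A$-vertex $x$ first-occurs later than $y$), a spurious $11$ is created that would wrongly destroy the edge. Controlling this is the technical heart of the argument: the placement and internal order of the block must be arranged so that, on every mixed pair, the inserted copy of $x$ abuts a letter different from $x$. This is exactly where a case analysis organised by first-occurrence order in $w$ (equivalently, by the parity of the alternating subword $w|_{\{x,y\}}$) must be exploited, and where the extra flexibility of $1$-$11$-representation over word-representation is genuinely used.

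Part (b) follows the same template, localized at the single vertex $v$. The only pairs whose count must increase are $\{v,u\}$ with $u\in N_A(v)$; and since a pair $\{v,u\}$ with $u\in A\setminus N(v)$ is already a non-edge of $G$, adding a $11$ to it is harmless. Thus it suffices to add one $11$ to every pair $\{v,u\}$ with $u\in A$, while leaving the pairs $\{v,u\}$ with $u\notin A$ and all pairs avoiding $v$ exactly as in $W$. I would achieve this with a correction block tying $v$ to the letters of $A$ — for instance a copy of $v$ placed against a permutation of $A$ — so that precisely the pairs $\{v,u\}$ with $u\in A$ gain a second $11$ at the junction with $W$. The obstacle is the same junction-control problem as in part (a), now confined to pairs involving $v$: one must ensure that the neighbors of $v$ outside $A$, whose edges to $v$ must be kept, do not pick up an additional $11$, again via a short case analysis ordered by first occurrence in $w$.
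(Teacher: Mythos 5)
Your quantitative setup is correct: in $W=r(\pi(w))\,w$ every edge carries exactly one occurrence of the pattern $11$ and every non-edge at least two, and a prepended block over $A$, ordered by first occurrence in $w$, adds exactly one further $11$ to every pair inside $A$ while leaving pairs disjoint from $A$ untouched. You also correctly locate the danger at the mixed pairs. The gap is that you defer exactly this difficulty to an unspecified ``case analysis'' and a choice of ``placement and internal order of the block'', whereas inside the framework you commit to, the difficulty is not a technicality but an impossibility. Suppose $x\in A$ first-occurs later in $w$ than some neighbour $y\notin A$; then $W|_{\{x,y\}}$ opens with $x$. Any block over the alphabet $A$ restricts on $\{x,y\}$ to a (nonempty) power of $x$ --- and $x$ must occur in the block, since otherwise the pairs inside $A$ containing $x$ never receive their extra $11$ --- so the block's restriction ends with $x$ and the junction unavoidably creates the spurious $xx$, destroying the edge $xy$. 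No ordering or placement of a pure-$A$ block can prevent this (appending it at the end meets the mirror-image problem with last occurrences; inserting it in the middle creates two uncontrolled junctions), and admitting letters of $V\setminus A$ into the block re-imports the same problem on other pairs. Part (b) fails identically: for $z\notin A$ first-occurring before $v$, $W|_{\{v,z\}}$ opens with $v$, so any block containing $v$ kills the edge $vz$, while a block avoiding $v$ can only feed the pairs $\{v,u\}$, $u\in A$, through factors $uu$, which then give mixed edges $\{u,z\}$ one occurrence too many. Note also that your guiding intuition --- that having each edge sit at the exact threshold $1$ ``is what makes the surgery possible'' --- is backwards: it is precisely what makes it impossible, because edges are left with zero slack against any uncontrolled junction.

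The ingredients that actually make the lemma work (this paper only cites it; the proofs are in \cite{CKKKP2019,FKP}) are absent from your proposal. First, one starts not from $r(\pi(w))\,w$ but from $ww$, where $w$ is a \emph{uniform} word-representant (which exists by \cite{KP08}): since each $w|_{\{x,y\}}$ then has even length, edges have count $0$ in $ww$ while non-edges already have count at least $2$, so every edge keeps one unit of slack. Second, instead of a prepended block --- which necessarily interacts with the rest of the word through a junction one cannot control --- one modifies the word \emph{in place}: replacing the first occurrence of a letter $x$ by $xx$ adds exactly one $11$ to every pair containing $x$ and affects no other pair, with no junction issue (nothing equal to $x$ can precede the first occurrence of $x$). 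Doubling the first occurrences of all letters of $A$ in $ww$ then gives $+2$ to pairs inside $A$, $+1$ to mixed pairs (absorbed by the slack), and $0$ elsewhere, which proves (a). For (b) even this is insufficient, since in-place doubling is pair-blind and cannot give $\{v,u\}$ two occurrences without also giving $\{u,u'\}$, $u,u'\in A$, two occurrences; a further mechanism is needed (for instance several permutation prefixes, where a pair gains one $11$ for each inversion between consecutive permutations, so that pairs $\{v\}\times A$ can be inverted twice while all other pairs are inverted at most once). So your plan correctly reduces the lemma to the junction-control problem, but that problem is provably unsolvable in the framework you set up, and the known proofs resolve it with tools your proposal never invokes.
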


In particular, Lemma~\ref{type2-lem}(b) is frequently referred to in this paper as the ``star operation'' or ``adding a star'', and it is used as follows: to prove the 1-11-representability of a given graph, we identify a set of new edges, all sharing the same vertex as an endpoint, and demonstrate that the resulting graph is word-representable.

\begin{lemma}[\cite{CKKKP2019}]\label{type3-lem} Let $G$ be a graph with a vertex $v\in V$.  $G$ is $1$-$11$-representable if at least one of the following conditions holds:

{\rm (a)} $G\setminus v$ is a comparability graph;

{\rm (b)} $G\setminus v$ is a circle graph.
\end{lemma}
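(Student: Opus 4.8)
The two parts both add a single vertex $v$ to a ``nice'' graph $H:=G\setminus v$, and the natural unifying strategy is to realise $G$ as a \emph{star-deletion} of a word-representable graph and then invoke the star operation of Lemma~\ref{type2-lem}(b). Concretely, I would look for a word-representable graph $G'$ on the same vertex set $V$ with $G'[V(H)]=H$ and $N_{G'}(v)\supseteq N_G(v)$; applying Lemma~\ref{type2-lem}(b) to $G'$ with $A:=N_{G'}(v)\setminus N_G(v)$ deletes exactly the edges joining $v$ to $A$ and returns $G$, proving $G$ is $1$-$11$-representable. The whole difficulty is thus to manufacture such a word-representable host $G'$.

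For part (a) this is clean. Fix a transitive orientation of $H$ and extend it by declaring $v$ a source, i.e.\ orienting $v\rightarrow x$ for every $x\in V(H)$; call the resulting oriented graph $G'$, so that $G'$ is $H$ together with a universal vertex $v$. I claim this orientation is transitive: in any directed $2$-path $a\rightarrow b\rightarrow c$ the vertex $v$ can only occur as $a$ (it is a source), and then $a=v\rightarrow c$ holds because $v$ is universal, while if $a,b,c\in V(H)$ transitivity is inherited from $H$. A transitive orientation is acyclic and shortcut-free, hence semi-transitive, so by Theorem~\ref{semi-trans-thm} the host $G'$ is word-representable. Taking $A:=V(H)\setminus N_G(v)$ in Lemma~\ref{type2-lem}(b) and deleting the star of edges from $v$ to $A$ recovers $G$, which is therefore $1$-$11$-representable.

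For part (b) the same recipe breaks down, and this is where the real work lies. The point of part (a) was that \emph{transitivity} survives the addition of a universal source; for a circle graph $H$ we only have a semi-transitive orientation, and semi-transitivity is \emph{not} preserved: making $v$ a universal source typically creates a shortcut along a directed path $v\rightarrow v_1\rightarrow\cdots\rightarrow v_k$ whose internal chords are absent in $H$. Indeed, the universal-vertex host can fail to be word-representable outright (for example, $C_5$ is a circle graph but $C_5$ with a universal vertex is $W_5$, which is not word-representable), so no word-representable supergraph is available for free. Instead I would argue directly from the defining feature of a circle graph, namely a $2$-uniform word-representant $u$ of $H$ (each vertex occurring twice, with $x,y$ alternating in $u$ iff $xy\in E(H)$). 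A doubled word such as $uu$ already $1$-$11$-represents $H$ on its own: a crossing pair contributes $0$ occurrences of $\{xx,yy\}$, while a separated or \emph{nested} pair contributes at least $2$ (the doubling is exactly what repairs the nested pairs, which a single copy of $u$ would miscount as an edge). The task is then to weave occurrences of $v$ into $uu$ so that $v$ nearly alternates with---accumulates at most one repeated letter against---each neighbour in $N_G(v)$, and piles up at least two repeats against each non-neighbour, all without disturbing the $H$-subwords (which is automatic, since inserting copies of $v$ does not change the relative order of the letters of $H$).

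The main obstacle is precisely this insertion of $v$ for an \emph{arbitrary} target neighbourhood $S:=N_G(v)$. The four occurrences of each $x\in V(H)$ in $uu$ are pinned down by $u$, so one cannot alternate $v$ against all neighbours simultaneously by a single placement; moreover a naive contiguous block of $v$'s makes $v$ adjacent to nobody, whereas spreading the $v$'s out couples every pair at once. Reading $u$ as a chord diagram, breaking the double occurrence of $x$ inside one copy amounts to inserting a $v$ into the arc spanned by the chord $x$, so realising $S$ exactly requires ``stabbing'' the chords of $S$ while avoiding those of $\bar S$---which is obstructed, for instance, whenever a chord of $S$ is nested inside a chord of $\bar S$. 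I would therefore not expect a one-shot placement to work; rather I would process the neighbours using the full structure of the diagram (the two endpoints, equivalently the interval, of each vertex), likely employing a reflected copy $r(u)$ and several occurrences of $v$, and then verify the count of $\{vv,xx\}$ separately in the three regimes (pairs inside $H$, neighbour--$v$ pairs, and non-neighbour--$v$ pairs). Controlling these counts uniformly over all $x$ is the step I expect to consume essentially all of the effort.
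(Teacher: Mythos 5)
First, a framing remark: the paper never proves this lemma --- it is imported from \cite{CKKKP2019} as a known tool --- so your attempt can only be judged against the original argument and on its own merits. On those terms, your part (a) is correct and complete: adding $v$ as a universal source to a transitively oriented $H=G\setminus v$ yields a transitive (hence acyclic and shortcut-free, hence semi-transitive) orientation, so the host $G'$ is word-representable by Theorem~\ref{semi-trans-thm}, and Lemma~\ref{type2-lem}(b) applied with $A=V(H)\setminus N_G(v)$ deletes exactly the star of unwanted edges at $v$ and returns $G$. There is no circularity in invoking Lemma~\ref{type2-lem}, which is established in \cite{CKKKP2019} independently of the present statement, so this reduction is a perfectly sound and rather clean proof of~(a).

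Part (b) is where the genuine gap lies: you do not prove it. You correctly diagnose why the strategy of (a) must fail --- $C_5$ plus a universal vertex is the non-word-representable $W_5$, so no word-representable host is available and the star operation cannot be applied --- and you correctly locate the real difficulty: given a $2$-uniform representant $u$ of $H$, one must insert copies of $v$ into a word such as $uu$ so that every neighbour accrues at most one occurrence of the pattern $11$ while every non-neighbour accrues at least two, and this amounts to a chord-stabbing problem with genuine obstructions (two neighbours whose chords are disjoint in $u$ already defeat a single stabbing position, while spreading many copies of $v$ around the word creates $vv$ occurrences against the very neighbours one is trying to protect). But having identified the obstruction, you stop; the plan to ``process the neighbours using the full structure of the diagram'' and then ``verify the count'' in three regimes is a statement of intent, not an argument, and you say yourself that this step would consume essentially all of the effort. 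Since the circle-graph half is by far the harder half --- as your own part (a) shows, the comparability case follows in a few lines from the stated toolbox --- the attempt as it stands establishes only half of the lemma, and the missing half is precisely the one requiring a new construction.
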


\begin{lemma}[\cite{FKP}] \label{new-tool-thm} Let $V_1,\ldots,V_k$ be pairwise disjoint subsets of $V$, the set of vertices of a word-representable graph $G$. We denote by $E(V_i)$ the set of all edges of $G$ having both end-points in $V_i$.  Then, the graph $H=G\backslash(\cup_{1\leq i\leq k} E(V_i))$, obtained by removing all edges belonging to $E(V_i)$ for all $1\leq i\leq k$, is $1$-$11$-representable. \end{lemma}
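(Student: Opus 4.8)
The plan is to reduce the statement to a single \emph{overlay}. Let $M$ be the complete multipartite graph on $V$ whose parts are $V_1,\dots,V_k$ together with a singleton part for every vertex lying in none of the $V_i$. Then a pair $\{x,y\}$ is an edge of $M$ exactly when $x$ and $y$ lie in different parts, so the within-part pairs of $M$ are precisely the edges removed from $G$, and one checks directly that $E(H)=E(G)\cap E(M)$. Complete multipartite graphs are comparability graphs (orient every cross edge from the lower part to the higher part in any fixed linear order of the parts), hence word-representable by Theorem~\ref{semi-trans-thm}. Thus both $G$ and $M$ are word-representable, and the goal is to splice their representants into a single $1$-$11$-representant of $H$.

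The construction template I would use is the following. Fix a word-representant $w$ of $G$ and set $\pi=\pi(w)$. As noted in the excerpt, $r(\pi)w$ is a $1$-$11$-representant of $G$; more precisely, for every pair $\{x,y\}$ the word $(r(\pi)w)|_{\{x,y\}}$ begins with the $\pi$-later of the two letters and contains exactly one occurrence of the pattern $11$ when $xy\in E(G)$ and at least two otherwise (the one guaranteed occurrence comes from the junction between $r(\pi)$ and $w$). I would then prepend a word $p$ that represents $M$ and whose last occurrences, restricted to cross pairs, reverse $\pi$; equivalently, $r(p)$ represents $M$ with an initial permutation agreeing with $\pi$ on every cross pair. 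Writing $w'=p\cdot r(\pi)\cdot w$, the verification splits into three cases. For a cross edge of $G$, the factor $p|_{\{x,y\}}$ is alternating (so contributes no $11$) and ends with the $\pi$-earlier vertex, while $r(\pi)w$ begins with the $\pi$-later vertex, so the $p$--$r(\pi)$ junction is clean; the total is the single $11$ from $r(\pi)w$, keeping the edge. For a cross non-edge the factor $r(\pi)w$ already supplies at least two occurrences. For a within-part pair, $p|_{\{x,y\}}$ is non-alternating and contributes at least one $11$, and $r(\pi)w$ contributes at least one more, so the pair becomes a non-edge, exactly as required for $H$.

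Everything above is routine once the auxiliary word $p$ exists, and constructing $p$ is where I expect the real difficulty to lie. A single reversed permutation cannot serve as $p$: whenever two parts are interleaved in $\pi$ the demand ``alternate across parts but not within parts, with last occurrences reversing $\pi$ on cross pairs'' becomes cyclically inconsistent. Worse, for a \emph{fixed} $\pi$ such a $p$ can be genuinely impossible. For instance, take $G=K_4$ on $\{a,b,c,d\}$ with parts $\{a,c\}$ and $\{b,d\}$, so that $H=C_4$; here the required $p$ would have to represent $M=C_4$ with initial (after reversal) permutation $abcd$, but orienting the edges of $C_4$ forward along $abcd$ produces the shortcut $a\to b\to c\to d$ with chord $a\to d$, so by Theorem~\ref{semi-trans-thm} no such representant exists. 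The obstruction is thus a semi-transitivity phenomenon, not a bookkeeping accident.

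The resolution I would pursue is to exploit the freedom in choosing $w$: rather than accepting the initial permutation handed to us, seek a \emph{single} linear order $\pi$ of $V$ whose forward orientation is shortcut-free (hence semi-transitive) for $G$ and for $M$ \emph{simultaneously}. Given such a common order, Theorem~\ref{semi-trans-thm} yields a word-representant $w$ of $G$ with $\pi(w)=\pi$ and a word-representant of $M$ with the same initial permutation, whose reverse provides the desired $p$; the three-case check above then goes through verbatim. Establishing the existence of a common semi-transitive linear order for an arbitrary word-representable $G$ and the complete multipartite overlay $M$ (and confirming that the standard passage from a semi-transitive orientation to a word can be forced to realize $\pi$ as its initial permutation) is the main obstacle, and is the step on which the whole argument rests. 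As a sanity check on the overall strategy, note that the special case $k=1$ is exactly Lemma~\ref{type2-lem}(a), so the construction should specialise to a known proof there.
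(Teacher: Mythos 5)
Your reduction to the complete multipartite overlay $M$, the three-case verification for the spliced word $w'=p\cdot r(\pi)\cdot w$, and the $K_4$/$C_4$ obstruction showing that a fixed initial permutation cannot work are all correct as far as they go. But the proposal is not a proof: it stops at precisely the step that carries all of the difficulty, namely the existence of a single linear order $\pi$ along which both $G$ and $M$ are semi-transitively oriented (plus the auxiliary claim that a semi-transitive orientation can be realized by a word-representant whose initial permutation is a prescribed topological sort, which also is asserted rather than proved). It is worth spelling out how strong the missing claim is. Orienting $M$ along $\pi$ is shortcut-free only when each part $V_i$ occupies (essentially) a consecutive interval of $\pi$: if $x_1<u<x_2$ with $x_1,x_2\in V_i$ and $u\notin V_i$, and some $z\notin V_i$ lies outside $(x_1,x_2)$, say $z>x_2$, then $x_1\rightarrow u\rightarrow x_2\rightarrow z$ with the chord $x_1\rightarrow z$ is a shortcut in $M$ (and symmetrically for $z<x_1$). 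So your claim amounts to the assertion that every word-representable $G$ admits, for an \emph{arbitrary} prescribed family of disjoint sets $V_1,\ldots,V_k$, a semi-transitive orientation with a topological sort in which every $V_i$ is an interval (up to one ``wrapping'' exception). Nothing in the definition of word-representability suggests such an interval property; it is not established anywhere, it interacts with the global structure of $G$ in a way that word-representability alone does not control, and proving it appears at least as hard as the lemma itself. Your sanity check that the case $k=1$ specializes to Lemma~\ref{type2-lem}(a) does not support the strategy, since that lemma is proved in \cite{CKKKP2019} by direct word surgery, not by any order-compatibility statement.

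Note also that the present paper contains no proof of this lemma at all: it is imported from \cite{FKP}, where it is obtained by direct manipulation of word-representants rather than by forcing a common semi-transitive order on $G$ and an auxiliary graph. So even granting your splice template, you have traded the lemma for a strictly stronger, unproven structural theorem about semi-transitive orientations, and the secondary realizability claim (a representant with $\pi(w)$ equal to a chosen topological sort of a given semi-transitive orientation) would still need an argument via the construction behind Theorem~\ref{semi-trans-thm}. As it stands, the proposal is a well-organized reduction with a correctly identified obstruction, but its central existence claim is a genuine gap, and quite possibly false in general.
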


As a corollary of Lemma~\ref{new-tool-thm}, we obtain the following lemma, which is frequently used in this paper and referred to as ``adding a matching'' or ``applying a matching operation''.

\begin{lemma}[\cite{FKP}]\label{adding-matching} Let the graph $G$ be obtained from a graph $H$ by adding a matching
% instead of a subset of non-edges 
(that is, by adding new edges no pair of which shares a vertex). If $G$ is word-representable then $H$ is $1$-$11$-representable. \end{lemma}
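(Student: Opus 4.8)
The plan is to derive Lemma~\ref{adding-matching} directly from Lemma~\ref{new-tool-thm} by recognizing a matching as a very special family of the edge-sets $E(V_i)$. The statement we want says: if $G$ is word-representable and $G$ is obtained from $H$ by adding a matching $M=\{e_1,\ldots,e_m\}$ (so $H=G\setminus M$), then $H$ is $1$-$11$-representable. The key observation is that each edge $e_i=x_iy_i$ in a matching has the property that no two edges share an endpoint, which is exactly the structural feature needed to realize each $e_i$ as the \emph{entire} edge set of a two-element vertex subset $V_i=\{x_i,y_i\}$.

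First I would set $V_i=\{x_i,y_i\}$ for each edge $e_i=x_iy_i$ of the matching $M$, for $1\leq i\leq m$. Because $M$ is a matching, these pairs are pairwise disjoint, so the $V_i$ satisfy the hypothesis of Lemma~\ref{new-tool-thm} that the subsets be pairwise disjoint subsets of $V=V(G)$. Next I would identify $E(V_i)$, the set of all edges of $G$ with both endpoints in $V_i$. Since $|V_i|=2$, the only possible edge within $V_i$ is $x_iy_i$ itself, and this edge is indeed present in $G$ (it is the matching edge $e_i$, which belongs to $G$ by assumption). Hence $E(V_i)=\{e_i\}$ exactly, with no spurious extra edges to worry about.

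The crucial step is then to compute the union $\bigcup_{1\leq i\leq m}E(V_i)=\bigcup_{1\leq i\leq m}\{e_i\}=M$, so that the graph $H'=G\setminus\left(\bigcup_{1\leq i\leq m}E(V_i)\right)$ produced by Lemma~\ref{new-tool-thm} is precisely $G\setminus M=H$. Since $G$ is word-representable, Lemma~\ref{new-tool-thm} applies verbatim and concludes that $H'=H$ is $1$-$11$-representable, which is exactly the desired statement.

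I do not expect a genuine obstacle here, since Lemma~\ref{adding-matching} is explicitly flagged in the excerpt as a corollary of Lemma~\ref{new-tool-thm}; the entire content is the bookkeeping that ``matching'' is the $|V_i|=2$ instance. The one point requiring a moment's care is verifying that $E(V_i)$ contains \emph{only} the matching edge and nothing else. This is immediate because a two-element set admits at most one edge, so no non-matching edge of $G$ can ever land inside any single $V_i$. It is worth emphasizing in the write-up that the pairwise-disjointness of the $V_i$ is where the matching condition (no two added edges share a vertex) is used: if two added edges shared a vertex, the corresponding vertex subsets would overlap and Lemma~\ref{new-tool-thm} could not be invoked in this clean way.
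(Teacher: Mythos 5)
Your proposal is correct and matches the paper's intended derivation exactly: the paper states this lemma as a corollary of Lemma~\ref{new-tool-thm}, obtained precisely by taking each $V_i$ to be the two endpoints of a matching edge, using disjointness of the pairs and the fact that a two-vertex set carries at most one edge. Nothing is missing; your bookkeeping of $E(V_i)=\{e_i\}$ and $\bigcup_i E(V_i)=M$ is the whole argument.
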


\begin{lemma}[\cite{FKP}] Suppose that the vertices of a graph $G$ can be  partitioned into a comparability graph formed by vertices in $A=\{a_1,\ldots,a_k\}$ and an independent set formed by vertices in $B=\{b_1,\ldots,b_{\ell}\}$. Then $G$ is permutationally $1$-$11$-representable.  \end{lemma}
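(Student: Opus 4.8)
The plan is to exhibit a representant that is a concatenation of permutations of $V=A\cup B$ and to control everything through one elementary observation. If $w=p_1p_2\cdots p_m$ is a concatenation of permutations of $V$, then for any two letters $x,y$ the word $w|_{\{x,y\}}$ has no occurrence of $xx$ or $yy$ inside a single block $p_i$, and exactly one occurrence at the junction of $p_i$ and $p_{i+1}$ precisely when $p_i$ and $p_{i+1}$ order the pair $\{x,y\}$ oppositely. Hence the number of occurrences of the pattern $11$ in $w|_{\{x,y\}}$ equals the number of indices $i$ at which consecutive permutations disagree on the relative order of $x$ and $y$; call this the number of \emph{switches} of the pair. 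So $w$ is a permutational $1$-$11$-representant of $G$ exactly when every edge has at most one switch and every non-edge has at least two. There are four types of pairs to control: comparable pairs inside $A$ (edges, $\le 1$ switch), incomparable pairs inside $A$ (non-edges, $\ge 2$), pairs inside $B$ (non-edges, $\ge 2$), and cross pairs between $A$ and $B$ (edge or non-edge according to the arbitrary bipartite adjacency).

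First I would settle the pairs inside $A$. Fix a transitive orientation of $G[A]$, let $\prec$ be the associated partial order, and choose a family of linear extensions $L_1,\ldots,L_d$ of $\prec$ whose intersection is $\prec$. Using the backbone $L_1\cdots L_dL_1\cdots L_d$, and insisting that every block of the final word restrict on $A$ to one of these linear extensions, forces any comparable pair to keep one fixed order throughout, giving $0$ switches, while any incomparable pair reverses at least once within each of the two passes, giving at least $2$ switches. Crucially, inserting letters of $B$ into these permutations never changes the relative order of two letters of $A$, so the $A$–$A$ conditions are fixed once and for all, regardless of how $B$ is woven in.

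Next I would place the letters of $B$. For pairs inside $B$ it suffices to make the relative order of the $b$'s equal to some fixed order in an initial run of blocks, its reverse in a middle run, and the original order again at the end, so that every pair of $B$ switches exactly twice; this depends only on the $B$-order and is again insensitive to the $A$-backbone. The heart of the matter is the cross pairs, and here the single unit of slack permitted to an edge is exactly what is needed: a cross edge $ab_j$ should be made to read ``$b_j$ before $a$'' throughout an early portion of the word and ``$a$ before $b_j$'' throughout a late portion, producing one switch, whereas a cross non-edge $ab_j$ must be forced to flip at least twice.

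The main obstacle is precisely this last step, because $S_j=N(b_j)\cap A$ is arbitrary while, inside any one permutation, sliding $b_j$ past a non-neighbour $a$ drags it past every vertex between them, in particular past any neighbours interleaved with $a$ in that linear extension; one therefore cannot, within a single block, separate $S_j$ from $A\setminus S_j$ at will. The route I would take is to exploit the fact that inserting, duplicating, or sliding a single letter $b_j$ changes only the switch counts of pairs containing $b_j$ and leaves all others untouched, and that an auxiliary block may be taken to be a copy of an adjacent block with any chosen linear extension of $\prec$ on $A$ (this adds junctions that are harmless to comparable $A$-pairs, which agree in all extensions, and only helpful to incomparable ones). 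Treating the vertices of $B$ one at a time and adjoining, for each $b_j$, a bounded number of such relocated copies, chosen so that every $a\notin S_j$ accrues a second switch while no $a\in S_j$ does, is the crux; verifying afterwards that these additions leave the $A$–$A$ and $B$–$B$ counts intact, and noting that the resulting word is by construction a concatenation of permutations, completes the proof of permutational $1$-$11$-representability.
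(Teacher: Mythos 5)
Your reduction of the problem to counting, for each pair, the number of consecutive blocks that disagree on its relative order is correct, and so are the two easy parts: the doubled realizer handles pairs inside $A$ (comparable pairs get $0$ switches, incomparable ones at least $2$, and inserting $B$-letters cannot disturb this), and the order--reverse--order scheme handles pairs inside $B$. But the argument stops exactly at the point you yourself call the crux, and what you offer there is a restatement of the goal, not a construction. Two things are wrong or missing. First, a parity point: if $b_j$ stands before all of $A$ in an early portion and after all of $A$ in a late portion, then \emph{every} cross pair involving $b_j$ flips an odd number of times, so cross non-edges must in fact flip at least three times; ``at least twice'' is unachievable with those endpoints, which signals that the bookkeeping was never carried out. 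Second, and decisively, a single relocated block cannot do what you ask of it: at a junction where $b_j$ stood before all of $A$, giving a switch to every non-neighbour and to no neighbour requires a linear extension of $\prec$ in which all of $A\setminus S_j$ precedes $b_j$ and all of $S_j$ follows it, i.e.\ $A\setminus S_j$ would have to be a down-set of $\prec$; this fails already for $a_1\prec a_2$ with $S_j=\{a_1\}$, since every linear extension puts $a_1$ before $a_2$. This is precisely the obstacle you flag, and ``adjoining relocated copies chosen so that every $a\notin S_j$ accrues a second switch while no $a\in S_j$ does'' does not overcome it.

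The missing idea is to move $b_j$ through $A$ gradually rather than in one jump. Fix a single linear extension $x_1x_2\cdots x_k$ of $\prec$ and, in the blocks of ``phase $j$'', keep $A$ in exactly this order while $b_j$ occupies a position $p\in\{0,1,\ldots,k\}$, the block reading $x_1\cdots x_p\,b_j\,x_{p+1}\cdots x_k$. Let $b_j$ walk from $p=0$ to $p=k$ one step at a time, and each time it steps past a non-neighbour let it step back and then forward once more. Then $\{x_i,b_j\}$ switches exactly once if $x_i\in S_j$ and exactly three times if $x_i\notin S_j$, while no $A$--$A$ pair switches inside the phase, since all its blocks carry the same linear extension. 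Run phases $j=1,\ldots,\ell$ consecutively, parking the not-yet-processed $b$'s before $A$ and the already-processed ones after $A$ in fixed orders (so no cross pair switches outside its own phase), and append the doubled realizer and your $B$-reversal blocks at the end with all $b$'s parked after $A$. Every comparable pair then gets $0$ switches, every incomparable pair at least $2$, every cross edge exactly $1$, every cross non-edge exactly $3$, and every $B$--$B$ pair at least $2$, and the word is a concatenation of permutations by construction. This walk through the down-sets of $\prec$, with backtracking at non-neighbours, is the step your sketch still has to supply; without it the proof is incomplete.
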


\section{Graphs on at most 8 vertices}\label{graph-8-vertices}

In what follows, $\chi(G)$ denotes the chromatic number of $G$. We say that a graph is {\em $(a_1,a_2,\ldots,a_k)$-colourable} if it can be coloured with $k$ colours, but not with $k-1$ colours, and the $i$-th colour class, corresponding to colour $i$, is the set $V_i=\{v_i,v'_i,v''_i,\ldots\}$ of size $a_i$. Our typical assumption, w.l.o.g., is that $a_1\geq a_2\geq \cdots\geq a_k$. However, in certain cases, we deviate from this assumption to be able to facilitate our arguments. 

\begin{remark}\label{clique-rem} Clearly, the vertices corresponding to $a_i=1$ must form a clique. \end{remark}

\begin{definition}
A {\em $(b_1b_2\ldots b_m)$-shortcut} is a shortcut with the directed path  $w_{b_1}\rightarrow w_{b_2}\rightarrow\cdots\rightarrow w_{b_m}$, where $w_{b_i}\in V_i$ for $1\leq i\leq m$. A $(b_1b_2 \ldots b_s$--)-shortcut is any $(c_1c_2 \ldots c_t)$-shortcut such that $b_i=c_i,$ for $i\in\{1,2\ldots,s\}$, and $t\geq s$.
\end{definition}

For sets of vertices $X$ and $Y$ in a graph, let $e(X,Y)$ denote the number of (directed or undirected) edges between $X$ and $Y$. For brevity, a singleton set $\{x\}$ is denoted as $x$. Additionally, for a graph $G$ with disjoint subsets of vertices $V_1,\ldots,V_m$, where $V_i$ is an independent set for $1\leq i\leq m$, let $G[V_1,\ldots,V_m]$ represent the induced $m$-partite subgraph of $G$ on the vertices in $\cup_{1\leq i\leq m}V_i$.  Finally, a {\em split graph} is a graph whose vertex set can be partitioned into a comparability graph and an independent set.

\begin{lemma}\label{high-chromatic-number} 
If a graph $G$ is $(k,1,1,\ldots,1)$-colourable, then $G$ is $1$-$11$-representable.
\end{lemma}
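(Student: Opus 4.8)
The plan is to analyze the structure of a $(k,1,1,\ldots,1)$-colourable graph $G$. Here the colour class $V_1=\{v_1,v_1',\ldots\}$ is an independent set of size $k$, and the remaining colour classes are singletons $V_2=\{v_2\},\ldots,V_\ell=\{v_\ell\}$. By Remark~\ref{clique-rem}, the singleton vertices $v_2,\ldots,v_\ell$ pairwise form a clique, call it $K$. So $G$ consists of a clique $K$ on $\ell-1$ vertices together with an independent set $A=V_1$ of size $k$, with arbitrary edges running between $A$ and $K$. This is precisely a \emph{split graph}: its vertices partition into a comparability graph (the clique $K$, which is trivially a comparability graph via any linear order) and an independent set ($A$). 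The last lemma quoted in the excerpt states that every split graph is permutationally $1$-$11$-representable, so I would first check whether $G$ falls directly under that result.

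The subtlety is that the final quoted lemma requires the vertices to partition into a \emph{comparability} graph and an \emph{independent set}, with no constraint on the edges between them; a clique is indeed a comparability graph, so this should apply verbatim. Thus the cleanest route is: recognize $G$ as a split graph and invoke that lemma directly to conclude $G$ is permutationally $1$-$11$-representable, hence $1$-$11$-representable. If one prefers a self-contained argument using the more elementary tools, the alternative is to use Lemma~\ref{type3-lem}: pick one vertex $v$ of the independent set $A$ and argue that $G\setminus v$ is a comparability graph. Since $G\setminus v$ is again a split graph (clique $K$ plus the smaller independent set $A\setminus\{v\}$), I would show split graphs are comparability graphs by orienting all clique edges according to a fixed linear order on $K$, and orienting each edge between $A\setminus\{v\}$ and $K$ from the independent-set endpoint into $K$; one checks this orientation is transitive because the independent-set vertices have out-edges only, so no two-step path originates from them, and the clique edges are transitively oriented among themselves.

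I would carry out the steps in this order: first fix notation identifying $V_1=A$ as the size-$k$ independent set and $v_2,\ldots,v_\ell$ as the clique $K$ by Remark~\ref{clique-rem}; second, observe $G$ is a split graph with part $A$ independent and part $K$ a clique; third, apply the split-graph lemma (or, for the Lemma~\ref{type3-lem} route, verify that a transitive orientation exists on $G\setminus v$). The only point demanding care is confirming that the required hypotheses are met without hidden assumptions---specifically, that a clique qualifies as a comparability graph and that there are no constraints on the bipartite edges between $A$ and $K$. The main (mild) obstacle will be ensuring the transitive orientation is genuinely transitive when checking the Lemma~\ref{type3-lem} route by hand, namely verifying that no directed $2$-path created by an $A$-to-$K$ edge followed by a $K$-internal edge forces a missing transitive edge; orienting all between-edges \emph{into} the clique sidesteps this, since the independent-set vertices then have no incoming edges and cannot sit in the middle of a directed path.
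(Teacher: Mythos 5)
Your primary route is correct and is essentially identical to the paper's proof: the paper also observes that $G$ is a split graph (in the paper's sense: a comparability graph, here the clique on the singleton colour classes, plus an independent set $V_1$ of size $k$) and invokes the split-graph lemma of Futorny et al.\ (Theorem~6 in \cite{FKP}, the last lemma quoted in Section~\ref{known-tools-sec}) to conclude $1$-$11$-representability.

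However, your ``self-contained'' alternative via Lemma~\ref{type3-lem}(a) is broken, and you should not rely on it. The claim that a split graph admits a transitive orientation by ordering the clique linearly and directing all cross edges into the clique is false: if $a\in A$ is adjacent to $k_1$ but not to $k_2$, and $k_1\rightarrow k_2$ in the clique order, then $a\rightarrow k_1\rightarrow k_2$ is a directed $2$-path whose transitive closure demands the edge $ak_2$, which need not exist. Your stated justification (independent-set vertices cannot sit in the middle of a directed path) only rules out paths \emph{through} $A$, not paths \emph{starting} in $A$, which is exactly where the problem arises. Indeed, split graphs are not comparability graphs in general: the net (a triangle $k_1k_2k_3$ with a pendant $a_i$ attached to each $k_i$) is a split graph with no transitive orientation, and it can occur as $G\setminus v$ in the setting of this lemma (e.g., take $G$ to be the net together with a vertex $a_0$ adjacent to $k_1,k_2,k_3$, which is $(4,1,1,1)$-colourable, and take $v=a_0$). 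So the fallback argument fails for at least some choices of $v$; only the direct appeal to the split-graph lemma goes through.
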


\begin{proof}
    Clearly, $G$ is a split graph with an independent set of size $k$, and by Theorem 6 in \cite{FKP}, any split graph is 1-11-representable.
\end{proof}

\begin{lemma}\label{symmetricvertices}
For an $(a_1,a_2,\ldots,a_k)$-colourable graph $G$, where $a_1\geq a_2\geq \cdots \geq a_i>a_{i+1}=a_{i+2}=\cdots =a_k=1$, if $e(V_s,V_t)=1$ for some $s\leq i<t$, then the vertex in $V_t$ and its unique neighbour in $V_s$ are adjacent to all vertices in $V_j$ for $j> i$.
\end{lemma}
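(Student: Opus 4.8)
The plan is to argue by a recoloring argument that exploits $\chi(G)=k$. Throughout, write $v_j$ for the unique vertex of the singleton class $V_j$ when $j>i$, and let $u$ be the unique neighbour of $v_t$ in $V_s$ guaranteed by $e(V_s,V_t)=1$.

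First I would record the following elementary consequence of the optimality of the colouring: if $\{v\}$ is a singleton colour class in a proper colouring of $G$ using $\chi(G)=k$ colours, then $v$ must have a neighbour in every other colour class. Otherwise $v$ could be recoloured with a colour missing from its neighbourhood, which empties its class and produces a proper $(k-1)$-colouring, contradicting $\chi(G)=k$. Applying this to the singleton $V_t=\{v_t\}$ immediately gives that $v_t$ is adjacent to $v_j$ for every $j>i$ with $j\neq t$ (the case $j=t$ being vacuous), which settles the claim for $v_t$.

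The remaining and main point is to show that $u$ is adjacent to every $v_j$ with $j>i$. Here I would perform a swap of colours between $u$ and $v_t$: recolour $u$ with colour $t$ and $v_t$ with colour $s$, leaving all other vertices unchanged. I claim this is again a proper $k$-colouring. Since $s\leq i$ we have $|V_s|=a_s\geq a_i\geq 2$, so $V_s\setminus\{u\}$ is nonempty and the new class of colour $s$ is $(V_s\setminus\{u\})\cup\{v_t\}$; this class is independent because $e(V_s,V_t)=1$ forces $u$ to be the only neighbour of $v_t$ in $V_s$, so $v_t$ has no neighbour in $V_s\setminus\{u\}$. The new class of colour $t$ is the singleton $\{u\}$, which is trivially independent. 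All other classes are untouched and none became empty, so we still have a proper colouring with exactly $k$ colours.

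Finally, in this new optimal colouring $\{u\}$ is a singleton colour class, so by the elementary fact above $u$ has a neighbour in every other colour class. The singleton classes $V_j$ with $j>i$ and $j\neq t$ are unchanged, so $u$ must be adjacent to each such $v_j$; together with the hypothesis that $u$ is adjacent to $v_t$, this yields that $u$ is adjacent to all $v_j$ with $j>i$, completing the proof. I expect the only delicate step to be verifying that the colour swap genuinely produces a proper $k$-colouring, which is precisely where the hypotheses $s\leq i$ (ensuring $V_s$ does not empty) and $e(V_s,V_t)=1$ (ensuring independence of the enlarged class) are used.
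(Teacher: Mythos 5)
Your proposal is correct and takes essentially the same approach as the paper: composing your colour swap with the final application of your singleton-class fact yields exactly the paper's recolouring, namely putting $V_s\setminus\{u\}$ and $v_t$ into one class and $u$ into the class of the alleged non-neighbour $v_\ell$, which frees a colour and contradicts $\chi(G)=k$. The paper performs this in a single step (handling $v_t$ via its Remark that singleton colour classes form a clique, the same observation as your ``elementary fact''), so the difference is only in packaging, not in substance.
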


\begin{proof}
 Assume that $V_t=\{v_t\}$ is adjacent to a vertex $v_s\in V_s$. Since $a_t=1$, by Remark~\ref{clique-rem}, the claim holds for $v_t$. Now, suppose that $v_s$ is not adjacent to some $v_{\ell}\in V_{\ell}$ for $\ell>i$. By recolouring the vertices in $V_s\backslash\{v_s\}$ in colour $t$ and the vertex $v_s$ in colour  $\ell$, we obtain a $(k-1)$-colouring of $G$, which  contradicts the assumption that $\chi(G)=k$. Therefore, $v_s$ must be adjacent to  all vertices in $V_j$ for $j> i$.
\end{proof}

The proof of the following theorem can be reduced to considering the 929 non-word-representable graphs on 8 vertices since any word-representable graph is 1-11-representable. Our final Section~\ref{concl-sec} contains an intriguing question about this. However, our arguments are not restricted to the 929 graphs -- we consider all graphs on 8 vertices based on their chromatic number and prove their 1-11-representability.    

\begin{theorem}\label{thm-main} All graphs on at most $8$ vertices are $1$-$11$-representable. \end{theorem}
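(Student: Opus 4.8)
The plan is to prove Theorem~\ref{thm-main} by a systematic case analysis based on the chromatic number $\chi(G)$ of a graph $G$ on at most $8$ vertices. Since every graph on at most $7$ vertices is already known to be $1$-$11$-representable, and every word-representable graph is $1$-$11$-representable, it suffices to handle the $8$-vertex graphs that are not word-representable. The first step is to stratify these graphs by their chromatic number $k$. If $\chi(G)\le 3$ then $G$ is $3$-colourable, hence word-representable (by orienting each edge from a lower colour to a higher colour, which yields a semi-transitive orientation as noted after Theorem~\ref{semi-trans-thm}), so we are done immediately. The remaining cases are $\chi(G)\in\{4,5,6,7,8\}$, and we organize these further by the colour-class partition type $(a_1,a_2,\ldots,a_k)$ with $a_1\ge a_2\ge\cdots\ge a_k$ and $\sum a_i=8$.

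Next I would dispatch the extreme cases using the lemmas already established. When $k$ is large, many colour classes are singletons, and Lemma~\ref{high-chromatic-number} handles every partition of the form $(k,1,1,\ldots,1)$; more generally, whenever the vertices split into a comparability graph plus an independent set the graph is a split graph and hence $1$-$11$-representable, so I would first check which partition types force a split-graph structure. For instance, with $8$ vertices the partitions $(4,1,1,1,1)$, $(3,1,1,1,1,1)$, $(2,1,1,1,1,1,1)$ and $(1,1,1,1,1,1,1,1)$ have at most one non-singleton class and should fall to the split-graph argument or to Lemma~\ref{symmetricvertices} combined with it. The genuinely interesting partition types are those with two or more classes of size at least $2$, such as $(2,2,2,2)$, $(3,3,2)$, $(3,2,2,1)$, $(2,2,2,1,1)$, $(4,2,2)$, $(3,2,1,1,1)$, and similar, where the independent set / comparability split is not immediate.

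For these harder partition types the core technique is to apply the edge-removal tools of Lemmas~\ref{type2-lem}, \ref{new-tool-thm}, and~\ref{adding-matching}. The strategy is to show that each such $G$ can be obtained from a word-representable graph by deleting a controlled set of edges: either a star (all edges at a common vertex, via Lemma~\ref{type2-lem}(b)), a matching (via Lemma~\ref{adding-matching}), or a union of cliques inside the colour classes $V_i$ (via Lemma~\ref{new-tool-thm}). Concretely, since each colour class $V_i$ is independent in $G$, one can try to add back edges within the classes to build a supergraph $G'=G\cup(\bigcup_i E(V_i))$ and argue via semi-transitive orientability that $G'$ is word-representable; then Lemma~\ref{new-tool-thm} delivers $1$-$11$-representability of $G$. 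The $(b_1b_2\ldots b_m)$-shortcut terminology introduced in the Definition is precisely the bookkeeping device for verifying semi-transitivity of these auxiliary orientations: one orients edges according to colour order and checks that no shortcut of a forbidden type survives. Lemma~\ref{symmetricvertices} is the structural workhorse here, letting us pin down the adjacencies of singleton classes so that the shortcut analysis stays finite and manageable.

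I expect the main obstacle to be the medium-chromatic cases, especially $\chi(G)=4$ with partition types like $(2,2,2,2)$ and $(3,2,2,1)$, where neither the split-graph shortcut nor a single star/matching operation obviously suffices. In these cases one likely needs to combine several tools -- for example, first reduce to a comparability or circle graph after deleting one vertex (Lemma~\ref{type3-lem}), or decompose $G$ along Lemma~\ref{type1-lem} if it happens to glue along a vertex or an edge -- and the argument must be carried out for each non-word-representable graph or each adjacency sub-pattern individually. Because there are $929$ non-word-representable graphs on $8$ vertices, the delicate part is to make the case division coarse enough that a uniform orientation-and-removal recipe covers whole families of partition types at once, rather than checking hundreds of graphs by hand; verifying that the chosen auxiliary graph $G'$ is genuinely word-representable (i.e.\ shortcut-free) in every residual configuration is where the real work lies.
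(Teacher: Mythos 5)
Your plan matches the paper's strategy in outline: stratify by chromatic number, dispatch $\chi(G)\leq 3$ via word-representability and the $(k,1,\ldots,1)$ partitions via the split-graph lemma (Lemma~\ref{high-chromatic-number}), then orient edges from lower to higher colour classes and add edges by star or matching operations to eliminate shortcuts, with Lemma~\ref{symmetricvertices} pinning down the adjacencies of singleton classes. But what you have written is a plan, not a proof. The entire mathematical content of the theorem lies in the case analysis for $\chi(G)\in\{4,5,6\}$, which you explicitly defer (``where the real work lies'') rather than carry out. The paper's proof spends nearly all of its length exactly there: for each partition type --- $(2,2,1,1,1,1)$; $(3,2,1,1,1)$, split into subcases $\kappa=0,1,2,3$ according to how many of the singleton-class vertices have a unique neighbour in $V_1$; $(2,2,2,1,1)$, split into five subcases indexed by the degree triples $f(2),f(3)$; and $(4,2,1,1)$, $(3,3,1,1)$, $(3,2,2,1)$, $(2,2,2,2)$ --- one must identify which specific edges to add (never more than a star or a matching, and always between colour classes), and then verify, shortcut type by shortcut type (e.g.\ $(12345)$-, $(1234)$-, $(1345)$-, $(2345)$-shortcuts), that the resulting orientation is semi-transitive. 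None of that verification appears in your proposal, so the theorem is not proved.

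A secondary concern: your one concrete recipe, building $G'=G\cup\bigl(\bigcup_i E(V_i)\bigr)$ by adding all edges \emph{inside} the colour classes and invoking Lemma~\ref{new-tool-thm}, is not what the paper does and is unlikely to work as stated. There is no reason why turning every colour class into a clique should make an arbitrary non-word-representable $8$-vertex graph word-representable; word-representability is not monotone under edge addition, so this step is a genuine unmet obligation, not a routine check. The paper instead keeps the colour classes independent and adds edges \emph{between} classes (completing selected bipartite pieces $G[V_i,V_j]$), so that the added edges form a star or a matching and the colour-order orientation remains acyclic with controllable shortcuts. Two further signs the case division was not carried through: the partition types $(3,3,2)$ and $(4,2,2)$ that you list as ``genuinely interesting'' have only three classes, hence $\chi=3$, and are already covered by the easy case; and your expectation that the argument must be run ``for each non-word-representable graph\ldots individually'' over the $929$ graphs is precisely what the paper's chromatic-number framework is designed to avoid --- its arguments never touch individual graphs.
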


\begin{proof} We begin with the easier cases and continue with the more involved ones. \\[-3mm]

\noindent
{\bf Case 1: $\chi(G)\leq 3$.} Any such graph is word-representable and hence 1-11-representable. \\[-3mm]

\noindent
{\bf Case 2: $\chi(G)=8$.} This is a complete graph which is word-representable and hence 1-11-representable. \\[-3mm]

\noindent
{\bf Case 3: $\chi(G)=7$.} By Lemma~\ref{high-chromatic-number}, $G$ is word-representable, and hence 1-11-representable. \\[-3mm]

Our strategy for the remainder of the proof is to consider a suitable $(a_1,a_2,\ldots, a_k)$-colouring of $G$. We then orient edges $uv$, where $u\in V_i$ and $v\in V_j$ with $i<j$, as $u\rightarrow v$. Next, we apply a star or a matching operation to add additional edges, oriented again from smaller colour to larger colours, to eliminate potential shortcuts. This process ensures a semi-transitive orientation, demonstrating that the resulting graph is word-representable and, consequently, that the original graph is 1-11-representable. \\[-3mm]

\noindent 
{\bf Case 4: $\chi(G)=6$.}
Then $G$ is either $(3,1,1,1,1,1)$-colourable or $(2,2,1,1,1,1)$-colourable. By Lemma~\ref{high-chromatic-number}, we can assume that $G$ is $(2,2,1,1,1,1)$-colourable, with its vertices coloured as shown in the picture below. No  edges are drawn in the picture (as is the case with all the pictures below), and, in particular, the vertices in $C=\{v_3,v_4,v_5,v_6\}$ form a clique. In the picture, colours $1$ to $6$ correspond to red, blue, green, orange, yellow, and black, respectively.

\begin{center}
    \begin{tikzpicture}

    \node[circle, fill=red, inner sep=1.5pt, label=left:$v_1$] (A) at (0,0) {};
    \node[circle, fill=red, inner sep=1.5pt, label=left:$v'_1$] (B) at (0,-1) {};
    \node[circle, fill=blue, inner sep=1.5pt, label=left:$v_2$] (C) at (2,0) {};
    \node[circle, fill=blue, inner sep=1.5pt, label=left:$v'_2$] (D) at (2,-1) {};
    \node[circle, fill=green, inner sep=1.5pt, label=left:$v_3$] (E) at (4,-0.5) {};
    \node[circle, fill=orange, inner sep=1.5pt, label=left:$v_4$] (F) at (6,-0.5) {};
    \node[circle, fill=yellow, inner sep=1.5pt, label=left:$v_5$] (G) at (8,-0.5) {};
    \node[circle, fill=black, inner sep=1.5pt, label=left:$v_6$] (H) at (10,-0.5) {};
    \end{tikzpicture}
\end{center}

Suppose that $v_1$ is not adjacent to a vertex in $C$. W.l.o.g., assume that $v_1v_3$ is not an edge. Clearly, $v'_1v_3$ is an edge; otherwise, $v_3$ can be coloured red, contradicting $\chi(G)=6$. But then, by Lemma~\ref{symmetricvertices}, $v'_1$ is adjacent to every vertex in $C$. 

The considerations above can also be applied to $v_2$ and $v'_2$ instead of $v_1$ and $v'_1$. By renaming $v_1$  (resp., $v_2$) and $v'_1$ (resp., $v'_2$), if necessary, we can assume that both $v_1$ and $v_2$ are adjacent to all vertices in $C$. Add any missing edges between $v'_1$ and the vertices in $C$ to obtain a graph $G'$, and rename the vertices in $C$, if necessary, so that the neighbours of $v'_2$ in $C$ are in the set $C'=\{v_i,v_{i+1},\ldots,v_6\}$ for $3\leq i\leq 7$ (note that $C'$ may be empty). Finally, orient the edges in $G'$  as $v_i\rightarrow v_j$ and $v'_i\rightarrow v_j$,  for $1\leq i<j\leq 6$, and $v_1\rightarrow v'_2$ and $v'_1\rightarrow v'_2$ (if any of these edges exists). It is easy to check that the obtained orientation is semi-transitive (in fact, transitive), so by Theorem~\ref{semi-trans-thm}, $G'$ is word-representable, and by Lemma~\ref{type2-lem}(b), $G$ is 1-11-representable. \\[-2mm]

\noindent
{\bf Case 5: $\chi(G)=5$.} The only possible shortcuts in this graph are (12345)-, (1234)-, (1235)-, (1245)-, (1345)-, or (2345)-shortcuts and possible missing edges appear only in $G[V_1,V_3]$, $G[V_1,V_4]$, $G[V_2,V_4]$, $G[V_2,V_5]$, $G[V_3,V_5]$.

$G$ is $(4,1,1,1,1)$-, $(3,2,1,1,1)$-, or $(2,2,2,1,1)$-colourable. By Lemma~\ref{high-chromatic-number}, we can assume that $G$ is not $(4,1,1,1,1)$-colourable.

If $G$ is $(2,1,1,1,3)$-colourable as in the picture below, which is equivalent to $G$ being $(3,2,1,1,1)$-colourable, then $G[\{v_3,v_4,v_5\}]$ is a triangle; $e(V_i,v_j)\geq 1$ for $i=1,5$ and $j=2,3,4$ or else we can recolour some vertex in $\{v_2,v_3,v_4\}$ and obtain a 4-colouring of $G$, which is impossible; $e(v_1,V_5)\geq 1$ and $e(v'_1,V_5)\geq 1$, and hence $e(V_1,V_5)\geq 2$, or we can recolour some vertices and get a (4,1,1,1,1)-colouring.

\begin{center}
	\begin{tikzpicture}
		
		\node[circle, fill=red, inner sep=1.5pt, label=left:$v_5$] (A) at (8,0) {};
		\node[circle, fill=red, inner sep=1.5pt, label=left:$v'_5$] (B) at (8,-0.5) {};
		\node[circle, fill=red, inner sep=1.5pt, label=left:$v''_5$] (C) at (8,-1) {};
		\node[circle, fill=green, inner sep=1.5pt, label=left:$v_1$] (D) at (0,0) {};
		\node[circle, fill=green, inner sep=1.5pt, label=left:$v'_1$] (E) at (0,-1) {};
		\node[circle, fill=orange, inner sep=1.5pt, label=below:$v_2$] (F) at (2,-0.5) {};
		\node[circle, fill=yellow, inner sep=1.5pt, label=below:$v_3$] (G) at (4,-0.5) {};
		\node[circle, fill=black, inner sep=1.5pt, label=below:$v_4$] (H) at (6,-0.5) {};
		
	\end{tikzpicture}
\end{center} 

We first prove the following fact, which will be used multiple times below: if there are at least two vertices among $v_2,v_3,v_4$ that have only one neighbour in $V_5$, then $G$ is 1-11-representable. W.l.o.g., we assume $e(v_3,V_5)=e(v_4,V_5)=1$ and $v_3v_5\in E(G)$. By Lemma~\ref{symmetricvertices}, $v_5$ is adjacent to $v_2$, $v_3$, and $v_4$. Now, by adding all edges in $G[V_1,v_3]$ and $G[V_1,v_4]$, we add at most two edges, which can only result from applying a star or matching operation. By Lemma~\ref{type2-lem} or Lemma~\ref{adding-matching}, we claim that there is no shortcut now, so the original graph $G$ is 1-11-representable. Indeed, possible
(12345)-, (1345)-, (1235)-, (1245)-, or (2345)-shortcuts must end with edge $v_3v_5$ or $v_4v_5$, but $G[V_1,V_3]$, $G[V_1,V_4]$, and $G[V_2,V_4]$ are complete bipartite and $v_2v_5,v_3v_5\in E(G)$. Moreover, (1234)-shortcuts do not exist because $G[V_1,V_3]$ and $G[V_2,V_4]$ are complete bipartite. Therefore, the orientation is indeed semi-transitive, and $G$ is indeed 1-11-representable. Hence, in the rest of the proof, we may assume that there are at least two vertices among $v_2,v_3,v_4$ that have more than one neighbours in $V_5$.

Now, let us discuss the different cases based on the possible values of $e(V_1, v_i)$, where $i\in\{2,3,4\}$. Consider the multiset $\{e(V_1, v_i)|i\in\{2,3,4\}\}$. Let $\kappa$ be the number of occurrences of 1 in this multiset. We consider four cases.

\begin{itemize}
\item[i)] $\kappa=0$, which means $e(V_1,v_i)=2$ for $i=2,3,4$. We can assume that $e(v_2,V_5)=e(V_2,V_5)\geq 2$ and $e(v_3,V_5)=e(V_3,V_5)\geq 2$ as stated before. By adding at most two edges we can make $G[V_2,V_5]$ and $G[V_3,V_5]$ complete bipartite. Note that $G[V_1,V_3]$, $G[V_1,V_4]$, and $G[V_2,V_4]$ are already complete bipartite, so there is no shortcut in the above orientation and $G$ is 1-11-representable.

\item[ii)] $\kappa=1$. By recolouring $v_3,v_4,v_5$, if necessary, we can assume that $e(V_1,v_2)=1$, $e(V_1,v_3)=2$, $e(V_1,v_4)=2$ and $v_1v_2\in E(G)$. By symmetry, we can assume that $e(v_2,V_5)\geq e(v_1,V_5)$ (if not, swap $v_1$ and $v_2$). We can add to $G$, by a matching or star operation, edge $v_2v_5$ (resp., $v_2v'_5$, $v_2v''_5$) if $v_1v_5$ (resp., $v_1v'_5$, $v_1v''_5$) is an edge in $E$, and edges
$\{v_3v_5,v_3v'_5,v_3v''_5\}$. In fact, we need to add at most two edges because $e(v_2,V_5)\geq e(v_1,V_5)$ and $e(v_3,V_5)\geq 2$. We claim that then there is no shortcut in the above orientation:
(12--)-shortcuts must start with $v_1v_2$, but $N_{V_5}(v_1)\subseteq N_{V_5}(v_2)$ and
$G[v_1,V_3]$, $G[v_1,V_4]$, $G[V_2,V_4]$, $G[V_2,V_5]$, and $G[V_3,V_5]$ are complete bipartite so there is no such shortcut; (1345), (2345)-shortcuts do not exist because $G[V_1,V_4]$, $G[V_3,V_5]$, and $G[V_2,V_4]$ are complete bipartite.

\item[iii)] $\kappa=2$. By recolouring $v_3,v_4,v_5$, if necessary,  we can assume that $e(V_1,v_2)=1,e(V_1,v_3)=1,e(V_1,v_4)=2$. Similarly to the above, we assume that $v_1v_2\in E(G)$ then $v_1v_3,v_1v_4\in E(G)$. By symmetry, we assume that $e(v_2,V_5)\geq e(v_1,V_5)$. We can add at most one edge to ensure $N_{V_5}(v_1)\subseteq N_{V_5}(v_2)$, and then add at most one additional edge to ensure that $G[V_3,V_5]$ is complete bipartite. Now there is no shortcut.

\item[iv)] $\kappa=3$, which means that $e(V_1,v_i)=1$ for $i=2,3,4$. We assume $v_1v_2\in E(G)$ then $v_1v_3,v_1v_4\in E(G)$. By symmetry, we assume $e(v_2,V_5)\geq e(v_1,V_5)$. Using the same method as in Case iii), we get a semi-transitive orientation by adding at most 2 edges, which means $G$ is 1-11-representable.
\end{itemize}
If $G$ is (2,1,1,2,2)-colourable as in the picture below, we can assume $G[V_1,V_4],G[V_1,V_5],G[V_4,V_5]$ all have a perfect matching or we can recolour some vertices and get a (3,2,2,1,1)-colouring. 

\begin{center}
	\begin{tikzpicture}
		
		\node[circle, fill=red, inner sep=1.5pt, label=left:$v_1$] (A) at (0,0) {};
		\node[circle, fill=red, inner sep=1.5pt, label=left:$v'_1$] (B) at (0,-1) {};
		\node[circle, fill=green, inner sep=1.5pt, label=left:$v_2$] (C) at (2,-0.5) {};
		\node[circle, fill=orange, inner sep=1.5pt, label=left:$v_3$] (D) at (4,-0.5) {};
		\node[circle, fill=yellow, inner sep=1.5pt, label=left:$v_4$] (E) at (6,0) {};
		\node[circle, fill=yellow, inner sep=1.5pt, label=left:$v'_4$] (F) at (6,-1) {};
		\node[circle, fill=black, inner sep=1.5pt, label=left:$v_5$] (G) at (8,0) {};
		\node[circle, fill=black, inner sep=1.5pt, label=left:$v'_5$] (H) at (8,-1) {};
		
	\end{tikzpicture}
\end{center} 

For $i\in\{2,3\}$, let $f(i)=(e(V_1,v_i),e(v_i,V_4),e(v_i,V_5))$ be a triple, where each component is in $\{1,2\}$. For $j\in\{1,4,5\}$, denote $f(i)(j)=e(v_i,V_j)$. By symmetry, we only need to consider cases when: i) for any coordinate $j\in\{1,2,3\}$, there is $i\in\{2,3\}$ such that $f(i)(j)=2$, and for any $i\in\{2,3\}$, $f(i)\neq (1,1,1)$;
ii) some $f(i)=(2,2,1)$; iii) at least two of coordinates of $f(2)$ and $f(3)$ is $2$; iv) $f(2)=(1,1,2)$, $f(3)=(2,1,1)$; v) $f(2)=(1,1,1)$, $f(3)=(2,2,2)$.

\begin{itemize}
	\item[i)] If $e(v_2,V_4)=e(v_2,V_5)=e(v_3,V_1)=2$, then we can add a matching to make $G[V_1,V_4]$ and $G[V_3, V_5]$ complete bipartite. But $G[V_1,V_3]$, $G[V_2,V_4]$, and $G[V_2,V_5]$ are already complete bipartite, so there is no shortcut in the above orientation and the original graph is 1-11-representable.
	
	\item[ii)] If $e(V_1,v_2)=1$, $e(v_2,V_4)=e(v_3,V_5)=2$, by Lemma~\ref{symmetricvertices}, we can assume $v_1v_2,v_1v_3\in E(G)$. After adding all edges in $G[V_1,V_4],G[V_3,V_5]$ we add at most a matching or we can recolour some vertices and get a (3,2,1,1,1)-colouring or a 4-colouring. We claim that then there is no shortcut in the above orientation and the original is 1-11-representable:
	$G[V_1,V_4]$, $G[V_3,V_5]$, $G[V_2,V_4]$, and $G[V_2,V_5]$ are complete bipartite, and shortcuts $G[V_1,V_3]$ are (123--)-shortcuts, which starts from $v_1v_2$, $v_2v_3$, but $v_1v_3\in E(G)$ so no such shortcut exists.
	
	\item[iii)] If $e(V_1,v_2)=e(V_1,v_3)=e(v_2,V_4)=e(v_3,V_4)=1$, by Lemma~\ref{symmetricvertices}, we can assume that $v_1v_2,v_1v_3,v_2v_4,v_3v_4\in E(G)$.
	Now we see that $v_1v_4\in E(G)$, or we can recolour $v_1$ and $v_4$ green, recolour $v_2$ red, recolour $v_3$ yellow and get a 4-colouring, contradiction.
	By adding at most two edges we can make $G[V_2,V_5]$ and $G[V_3,V_5]$ complete bipartite. We claim that then there is no shortcut in the above orientation and the original is 1-11-representable. Indeed, 
	(1--)-shortcuts must start from $v_1v_2\rightarrow v_2v_3\rightarrow v_3v_4$ or $v_1v_2\rightarrow v_2v_4$, but $v_1v_3,v_1v_4,v_2v_4\in G$ and $G[V_2,V_5]$ and $G[V_3,V_5]$ are complete bipartite, so no such shortcut exists.
	
	\item[iv)] If $e(V_1,v_2)=e(v_2,V_4)=e(v_3,V_4)=e(v_3,V_5)=1$ and $e(v_2,V_5)=e(V_1,v_3)=2$, then by symmetry and Lemma~\ref{symmetricvertices} we can assume that $v_1v_2,v_2v_4,v_3v_4,v_3v_5\in E(G)$. By adding at most a matching (or we can get a (3,2,1,1,1)-colouring or a 4 colouring) we can make $G[V_1,V_4]$ and $G[V_3,V_5]$ complete bipartite. We claim that then there is no shortcut in the above orientation and the original graph is 1-11-representable:
	(12--)-shortcuts must start from $v_1v_2\rightarrow v_2v_3\rightarrow v_3v_4$ or $v_1v_2\rightarrow v_2v_4$, but $v_1v_3,v_2v_4\in E(G)$ and $G[V_1,V_4]$, $G[V_2,V_5]$, and $G[V_3,V_5]$ are complete bipartite, so no such shortcut exists; (1345)-shortcuts do not exist because $G[V_1,V_4]$ and $G[V_3,V_5]$ are complete; (2345)-shortcuts must start from $v_2v_3\rightarrow v_3v_4$, and so it's easy to see that no such shortcuts exists.
	
	\item[v)] If $e(v_2,V_i)=1,e(v_3,V_i)=2$ for $i\in\{1,4,5\}$, by symmetry we can assume $v_1v_2,v_2v_4,v_2v_5\in E(G)$. Now $V_1,V_4,V_5$ are symmetric. By switching $v_i,v_2$ for $i\in\{1,5,7\}$, we see that $e(v_1,V_4)=e(v_1,V_5)=e(V_1,v_4)=e(v_4,V_5)=e(V_1,v_5)=e(V_4,v_5)=1$. We claim that $G[\{v_1,v_4,v_5\}]$ is not the empty graph: or else recolour $v_1,v_4,v_5$ red and recolour $v'_1,v_2$ green, we get a (3,2,1,1,1)-colouring. Then we can assume $v_1v_5\in E(G)$. Then adding edge $v_1v_4$ (resp., $v_1v'_4$) if $v_4v_5\in E(G)$  (resp., $v'_4v_5\in E(G)$) and $v_2v'_4$, we add at most two edges and
	claim that there is no shortcut in the above orientation and the original graph is 1-11-representable:
	$G[V_1,V_3]$, $G[V_3,V_5]$, and $G[V_2,V_4]$ are complete bipartite;
	shortcuts concerning $G[V_1,V_4],G[V_2,V_5]$ must start from $v_1$ and end at $v_4$, but $N_{V_5}(v_1)= N_{V_5}(v_2)$ and $N_{V_4}(v_1)\supseteq N_{V_4}(v_5)$. So there is no shortcut. 
\end{itemize}

\noindent
{\bf Case 6: $\chi(G)=4$.} The only shortcuts in this graph are only (1234)-shortcuts and possible missing edges appear only in $G[V_1,V_3],G[V_2,V_4]$.

If $G$ is $(5,1,1,1)$-colourable then, by Lemma~\ref{high-chromatic-number}, $G$ is 1-11-representable.

If $G$ is $(4,2,1,1)$-colourable as in the picture below, we can assume $e(V_2,v_3)\leq e(V_2,v_4)$ by symmetry. If $e(V_2,v_4)=2$, we can add all edges in $G[V_1,v_3]$ by adding at most a star subgraph. Then $G[V_1,V_3]$ and $G[V_2,V_4]$ are complete bipartite and there is no shortcut in the above orientation. So the original graph is 1-11-representable.
If $e(V_2,v_3)=e(V_2,v_4)=1$, by Lemma~\ref{symmetricvertices}, we can assume $v_2v_3,v_2v_4\in E(G)$. Then still we add all edges in $G[V_1,v_3]$ by adding at most a star subgraph and there is no shortcut in the above orientation. Indeed, a (1234)-shortcut must end with $v_2v_3$ and $v_3v_4$, but $G[V_1,V_3]$ and $G[v_2,V_4]$ are complete bipartite, which is a contradiction. So, the original graph is 1-11-representable.

\begin{center}
    \begin{tikzpicture}

    \node[circle, fill=red, inner sep=1.5pt, label=left:$v_1$] (A) at (0,0.3) {};
    \node[circle, fill=red, inner sep=1.5pt, label=left:$v'_1$] (B) at (0,-0.2) {};
    \node[circle, fill=red, inner sep=1.5pt, label=left:$v''_1$] (C) at (0,-0.7) {};
    \node[circle, fill=red, inner sep=1.5pt, label=left:$v'''_1$] (D) at (0,-1.2) {};
    \node[circle, fill=green, inner sep=1.5pt, label=left:$v_2$] (E) at (3,0) {};
    \node[circle, fill=green, inner sep=1.5pt, label=left:$v'_2$] (F) at (3,-1) {};
    \node[circle, fill=yellow, inner sep=1.5pt, label=left:$v_3$] (G) at (6,-0.5) {};
    \node[circle, fill=blue, inner sep=1.5pt, label=left:$v_4$] (H) at (9,-0.5) {};

    \end{tikzpicture}
\end{center} 

If $G$ is $(3,3,1,1)$-colourable as in the picture below, we see that $e(V_i,V_j)\geq 1$ for $i\in\{1,2\},j\in\{3,4\}$. If $e(V_2,v_3)=1$, by Lemma~\ref{symmetricvertices} we can assume $v_2v_3,v_2v_4\in E(G)$.
We can add all edges in $G[V_1,v_3]$ by adding at most a star subgraph and there is no shortcut in the above orientation. Indeed, a (1234)-shortcut must end with $v_2v_3$ and $v_3v_4$, but $G[V_1,V_3]$ and $G[v_2,V_4]$ are complete bipartite, which is a contradiction. So the original graph is 1-11-representable. Now by symmetry we can assume $e(V_i,V_j)\geq 2$ for $i\in\{1,2\}$, $j\in\{3,4\}$. Then, by adding at most two edges we can make $G[V_1,V_3]$ and $G[V_2,V_4]$ complete bipartite and there is no shortcut in the above orientation. So the original graph is 1-11-representable.

\begin{center}
    \begin{tikzpicture}

    \node[circle, fill=red, inner sep=1.5pt, label=left:$v_1$] (A) at (0,0) {};
    \node[circle, fill=red, inner sep=1.5pt, label=left:$v'_1$] (B) at (0,-0.5) {};
    \node[circle, fill=red, inner sep=1.5pt, label=left:$v''_1$] (C) at (0,-1) {};
    \node[circle, fill=green, inner sep=1.5pt, label=left:$v_2$] (D) at (3,0) {};
    \node[circle, fill=green, inner sep=1.5pt, label=left:$v'_2$] (E) at (3,-0.5) {};
    \node[circle, fill=green, inner sep=1.5pt, label=left:$v''_2$] (F) at (3,-1) {};
    \node[circle, fill=yellow, inner sep=1.5pt, label=left:$v_3$] (G) at (6,-0.5) {};
    \node[circle, fill=blue, inner sep=1.5pt, label=left:$v_4$] (H) at (9,-0.5) {};

    \end{tikzpicture}
\end{center} 

If $G$ is $(3,2,1,2)$-colourable as in the picture below, 
we can assume that $G$ is not $(3,3,1,1)$-colourable, so we can add a matching into $G[V_2,V_4]$ to make it a complete bipartite graph. If $e(V_1,v_3)\geq 2$, 
then we can add a matching into $G$ to make $G[V_1, V_3]$ and $G[V_2,V_4]$ complete bipartite graphs and then there is no shortcut under the above orientation, and so the original graph is 1-11-representable.

\begin{center}
    \begin{tikzpicture}

    \node[circle, fill=red, inner sep=1.5pt, label=left:$v_1$] (A) at (0,0) {};
    \node[circle, fill=red, inner sep=1.5pt, label=left:$v'_1$] (B) at (0,-0.5) {};
    \node[circle, fill=red, inner sep=1.5pt, label=left:$v''_1$] (C) at (0,-1) {};
    \node[circle, fill=green, inner sep=1.5pt, label=left:$v_2$] (D) at (3,0) {};
    \node[circle, fill=green, inner sep=1.5pt, label=left:$v'_2$] (E) at (3,-1) {};
    \node[circle, fill=yellow, inner sep=1.5pt, label=left:$v_3$] (F) at (6,-0.5) {};
    \node[circle, fill=blue, inner sep=1.5pt, label=left:$v_4$] (G) at (9,0) {};
    \node[circle, fill=blue, inner sep=1.5pt, label=left:$v'_4$] (H) at (9,-1) {};

    \end{tikzpicture}
\end{center} 

If $e(V_1,v_3)=1$, then by swapping $V_2$ and $V_3$ we can assume $E(V_1,V_2)=\{v_1v_2\}$ as in the picture below. In this case, we can add a matching into $G$ to make $G[v_1,V_3]$ and $G[V_2,V_4]$ complete bipartite graphs and then there is no shortcut under the above orientation: (1234)-shortcuts must start with $v_1v_4$, but $G[v_1,V_3]$ and $G[V_2,V_4]$ are complete bipartite. So the original graph is 1-11-representable.

\begin{center}
    \begin{tikzpicture}

    \node[circle, fill=red, inner sep=1.5pt, label=left:$v_1$] (A) at (0,0) {};
    \node[circle, fill=red, inner sep=1.5pt, label=left:$v'_1$] (B) at (0,-0.5) {};
    \node[circle, fill=red, inner sep=1.5pt, label=left:$v''_1$] (C) at (0,-1) {};
    \node[circle, fill=green, inner sep=1.5pt, label=left:$v_2$] (D) at (3,-0.5) {};
    \node[circle, fill=yellow, inner sep=1.5pt, label=left:$v_3$] (E) at (6,0) {};
    \node[circle, fill=yellow, inner sep=1.5pt, label=left:$v'_3$] (F) at (6,-1) {};
    \node[circle, fill=blue, inner sep=1.5pt, label=left:$v_4$] (G) at (9,0) {};
    \node[circle, fill=blue, inner sep=1.5pt, label=left:$v'_4$] (H) at (9,-1) {};

    \end{tikzpicture}
\end{center} 

If $G$ is $(2,2,2,2)$-colourable as in the picture below, we can assume that $G$ is not $(3,2,2,1)$-colourable. Then we can add a matching into $G$ to make $G[V_1, V_3]$ and $G[V_2,V_4]$ complete bipartite graphs or we can find a vertex $v_i$ such that there is a component not containing it and there is no edge between this vertex and this component. By adding this vertex to the component, we will get a $(3,2,2,1)$-colouring, contradiction. Thus there is no shortcut under the above orientation, and so $G$ is 1-11-representable.

\begin{center}
    \begin{tikzpicture}

    \node[circle, fill=red, inner sep=1.5pt, label=left:$v_1$] (A) at (0,0) {};
    \node[circle, fill=red, inner sep=1.5pt, label=left:$v'_1$] (B) at (0,-1) {};
    \node[circle, fill=green, inner sep=1.5pt, label=left:$v_2$] (C) at (3,0) {};
    \node[circle, fill=green, inner sep=1.5pt, label=left:$v'_2$] (D) at (3,-1) {};
    \node[circle, fill=yellow, inner sep=1.5pt, label=left:$v_3$] (E) at (6,0) {};
    \node[circle, fill=yellow, inner sep=1.5pt, label=left:$v'_3$] (F) at (6,-1) {};
    \node[circle, fill=blue, inner sep=1.5pt, label=left:$v_4$] (G) at (9,0) {};
    \node[circle, fill=blue, inner sep=1.5pt, label=left:$v'_4$] (H) at (9,-1) {};

    \end{tikzpicture}
\end{center} 

All cases have been considered; thus, the theorem is proved.
\end{proof}

\section{Multi-1-11-representation number of a graph}\label{multi-1-11-repr-sec}

In this section, we generalize and extend the notion of the {\em multi-word-representation number} of a graph, introduced in \cite{KM} by Kenkireth and Malhotra. The key idea involves using multiple words over the same alphabet to represent different graphs, and declaring that the union of these word-representants represents the union of graphs. 

\begin{definition}\label{multi-represent} Suppose that the graphs $G_1, G_2, \ldots, G_m$ share the same vertex set, i.e., $V(G_1)=\cdots=V(G_m)=V$ and that the graphs $G$ and $G'$ satisfy the following: 
\begin{itemize}
\item $V(G)=V$ and $E(G)=\cup_{1\leq i\leq m}E(G_i)$;
\item $V(G')=V$, $E(G')=\cup_{1\leq i\leq m}E(G_i)$, and $E(G_i)\cap E(G_j)=\emptyset$ for $1\leq i<j\leq m$.
\end{itemize}
Further assume that each $G_i$, $1\leq i\leq m$, is $k$-11-representable, and that $m$ is {\em minimal possible} value for $G$ and $G'$. Then, we define the  (resp., {\em strict}) {\em multi-$k$-$11$-representation number} of $G$ (resp., $G'$) to be $m$.
\end{definition}

Since each graph is $k$-11-representable for $k\geq 2$ (see \cite{CKKKP2019}), the (strict) multi-$k$-11-representation number of such a graph is 1. Hence, Definition~\ref{multi-represent} is meaningful only in the case $k\in\{0,1\}$. In particular, unless it is proven that all graphs are 1-11-representable (which might not be true!), establishing the (strict) multi-$k$-11-representation number for graphs, or classes of graphs, remains an interesting and challenging problem. Furthermore, note that the multi-1-11-representation number is clearly at most equal to the strict multi-1-11-representation number.

Using the approach in \cite{KM} and applying our results from Section~\ref{graph-8-vertices}, we can prove the following theorem.

\begin{theorem}\label{thm-24} All graphs on at most $24$ vertices have a strict multi-$1$-$11$-representation number of at most $2$. \end{theorem}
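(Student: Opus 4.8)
The plan is to exploit the identity $24 = 3\cdot 8$ together with Theorem~\ref{thm-main} by splitting the edges of a graph into two edge-disjoint subgraphs, each $1$-$11$-representable for a different structural reason. Let $G$ be any graph on $n\leq 24$ vertices. First I would partition the vertex set $V$ into three pairwise disjoint blocks $V^{(1)}, V^{(2)}, V^{(3)}$, each of size at most $8$; this is always possible since $n\leq 24$.

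Next, I would define two spanning subgraphs of $G$ on the common vertex set $V$: let $G_1$ carry exactly the edges of $G$ whose endpoints lie in the same block, and let $G_2$ carry exactly the edges of $G$ whose endpoints lie in different blocks. By construction $E(G_1)$ and $E(G_2)$ are disjoint and $E(G_1)\cup E(G_2) = E(G)$, so it suffices to show that each $G_i$ is $1$-$11$-representable; this exhibits a strict multi-$1$-$11$-representation of $G$ with $m=2$, and hence bounds the strict multi-$1$-$11$-representation number of $G$ by $2$.

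For $G_1$: it is the disjoint union of the three induced subgraphs $G[V^{(1)}]$, $G[V^{(2)}]$, $G[V^{(3)}]$, each on at most $8$ vertices. By Theorem~\ref{thm-main} each of these is $1$-$11$-representable, and by Lemma~\ref{type1-lem}(a), applied twice, their disjoint union is $1$-$11$-representable. For $G_2$: each block $V^{(i)}$ is an independent set in $G_2$, so $G_2$ is $3$-partite and therefore $3$-colourable; as noted after Theorem~\ref{semi-trans-thm}, every $3$-colourable graph is word-representable, hence $1$-$11$-representable.

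The argument involves essentially no obstacle beyond identifying the right decomposition; the only point to verify is that the two structural reasons---``at most $8$ vertices'' for the within-block graph and ``$3$-colourable'' for the between-block graph---are simultaneously available, and these are precisely balanced at $3\cdot 8 = 24$. Indeed, using four or more blocks would make the between-block graph potentially $4$-chromatic and break the word-representability argument for $G_2$, while blocks of size exceeding $8$ would place the within-block graph $G_1$ outside the reach of Theorem~\ref{thm-main}. Thus $24$ is exactly the threshold this method delivers, and I expect the write-up to be short, with the main content being the verification above rather than any delicate estimate.
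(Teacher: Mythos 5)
Your proposal is correct and follows essentially the same route as the paper: partition the vertices into three blocks of size at most $8$, represent the within-block edges via Theorem~\ref{thm-main} and Lemma~\ref{type1-lem}(a), and represent the between-block edges as a $3$-colourable (hence word-representable) graph. The only cosmetic difference is that the paper fixes $n=24$ and invokes the hereditary property for smaller graphs, whereas you allow blocks of size at most $8$ directly; the substance is identical.
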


\begin{proof} Suppose $G$ is a graph on 24 vertices (for smaller graphs, the statement will follow by the hereditary nature of 1-11-representation). Consider an arbitrary partition of $V(G)$ into three disjoint subsets $V_1$, $V_2$, and $V_3$, each containing 8 vertices.  By Theorem~\ref{thm-main}, the graph $G[V_i]$, $i\in\{1,2,3\}$, is 1-11-representable. Furthermore, by Lemma~\ref{type1-lem}(a), the graph $G':=G[V_1]\cup G[V_2]\cup G[V_3]$ can be 1-11-represented by a word $w_1$.  

Next, removing all edges within  $G[V_1]$, $G[V_2]$, and $G[V_3]$ from $G$, we obtain a 3-colourable graph $G''$, which is word-representable and therefore 1-11-representable \cite{CKKKP2019}. Thus, we can find a word $w_2$ that 1-11-represents $G''$.

Since $V(G)=V(G')=V(G'')$, $E(G)=E(G')\cup E(G'')$, and $E(G')\cap E(G'')=\emptyset$, the strict 1-11-representation number of $G$ is at most 2. \end{proof}

\section{Concluding remarks}\label{concl-sec}

We conclude this paper with several open directions for future research:

\begin{itemize}
\item Are all 4-colourable graphs 1-11-representable? If this is the case, the result of Theorem~\ref{thm-24} could be immediately improved by replacing 24 with 32. Indeed, in the proof of that theorem, we could partition the vertex set of $G$ into four sets, each containing 8 vertices, and use the 1-11-representability of the 4-colourable $G''$.

\item If proving or disproving that all 4-colourable graphs are 1-11-representable proves too challenging, one could instead address the same question for all planar graphs, a subclass of 4-colourable graphs.

\item Assuming that proving or disproving that any graph is 1-11-representable remains infeasible with existing tools, one could focus on proving or disproving that the (strict) multi-1-11-representation number of any graph is at most 2. This question is likely easier, at least for various classes of graphs, than proving 1-11-representability. It should also be more tractable than resolving the open problem of whether the multi-word-representation number of any graph is at most 2 (since graphs can be modified by adding edges).

\item The notions of strict and non-strict multi-$k$-11-representation numbers are equivalent for $k \geq 2$. What can be said about $k \in {0,1}$? Is it possible to construct any counterexamples in this case?

\item Definition~\ref{multi-represent} can, in fact, be refined to the $\ell$-multi-$k$-11-representation number, where any edge can belong to at most $\ell$ subgraphs. In this framework, the strict multi-$k$-11-representation corresponds to the case $\ell=1$. Could such a refinement lead to interesting results for $k=0$ (word-representation) or $k=1$ (assuming not all graphs are 1-11-representable)?

\item Finally, Herman Chen's experiments~\cite{Chen} suggest that the 1-11-representability of graphs on 8 vertices can be established by adding at most one new edge (each of the 929 non-word-representable graphs can be converted into a word-representable graph by adding a single edge). However, our arguments in Section~\ref{graph-8-vertices} often rely on adding more than one edge. Is it possible to prove (not computationally) that adding at most one edge is sufficient? Such a proof could lead to useful techniques, for example, to establish the 1-11-representability of all graphs with 9 vertices (if they are indeed 1-11-representable).
\end{itemize}

\section*{Acknowledgments} The authors are grateful to Brian Ritchie and Hehui Wu for useful discussions related to the paper.

\end{document}